\theoremstyle{plain}
\newtheorem{theorem}{Theorem}[section]
\newtheorem{lemma}{Lemma}[section]
\theoremstyle{remark}
\newtheorem{example}{Example}[section]
\theoremstyle{definition}
\newtheorem{definition}{Definition}[section]
\numberwithin{equation}{section}
\newcommand{\ba}{\begin{array} }
\newcommand{\ea}{\end{array} }
\newcommand{\be}{\begin{equation} }
\newcommand{\ee}{\end{equation} }
\newcommand{\f}{\displaystyle\frac}
\begin{document}
\title{\bf Periodic Solutions for SDEs through Upper and Lower Solutions
\author{ Chunyan Ji$^{a,b}$\,\,\,\,\,\,Xue Yang$^{a,c}$ \,\,\,\,\,\, Yong
Li$^{c,d,a}$\thanks{Corresponding author at: School of Mathematics, 
Jilin University, Changchun, 130012, P.R.
China.}}
\thanks{ E-mail
addresses: chunyanji80@hotmail.com(C. Ji),
yangxuemath@163.com(X.Yang), liyongmath@163.com(Y.Li).}}

\date{}
\maketitle \setcounter{page}{1} \pagestyle{plain}
\begin{center}{{\scriptsize $^a$ School of Mathematics and Statistics \& Center for
Mathematics and Interdisciplinary Sciences, Northeast Normal
University, Changchun, 130024, P.R. China.}\\
{\scriptsize $^b$ School of Mathematics and Statistics, Changshu
Institute of Technology, Changshu 215500,  P.R. China.}\\
{\scriptsize $^c$ School of Mathematics, Jilin University,
Changchun,
130012, P.R. China}\\
{\scriptsize $^d$ State Key Laboratory of Automotive Simulation and
Control, Jilin University, 130025, P.R. China}}
\end{center}

\noindent {\bf Abstract:} We study a kind of better recurrence than
Kolmogorov's one: periodicity recurrence, which corresponds periodic
solutions in distribution for stochastic differential equations. On
the basis of technique of upper and lower solutions and comparison
principle, we obtain the existence of periodic solutions in
distribution for stochastic differential equations (SDEs). Hence
this provides an effective method how to study the periodicity of
stochastic systems by analyzing deterministic ones. We also
illustrate our results.

\noindent {\bf Keywords:} Upper and lower solutions; Comparison
principle; Periodic solutions in distribution

\noindent {\bf 2000 MR Subject Classification.}

\section{Introduction}
This paper concerns a kind of better recurrence: periodicity recurrence, that is, periodic solutions in
distribution of the following stochastic differential equation (SDE
for short):
\begin{equation}\label{SDE}
dX(t)=f(t,X(t))dt+g(t,X(t))dB(t).
\end{equation}
So far this has been yet paid rare attention relative to the
existence of stationary solutions. It is well known that the
existence problem of periodic solutions is one of center topics in
the qualitative theory of deterministic differential equations for
its significance in the physical science\cite{Krasnoselskii1968}.
There has been a large amount of work (see for example
\cite{CandidoLlibreNovaes2017,Fokam2017,TaralloZhou2018} and the
references therein). However, for SDEs, the existence of periodic
solutions is thought to be a challenging problem. Certainly,
periodic solutions of a deterministic system are not always
persistent under diffusion. Naturally, one asks when this better
periodicity persists for stochastic systems; more precisely, when the
ordinary differential equation (ODE for short)
$$
dX=f(t,X)dt
$$
has periodic solutions, does SDE \eqref{SDE} still admit periodic solutions in distribution?

In the present paper, we will
touch the problem. We find that the answer will be affirmative if the ODE has upper and lower
solutions. This conclusion is somewhat unexpected, because no additional condition is added to the diffusion term besides the usual one.  We believe that it is best possible to pose periodic solutions in distribution. As one pointed out  ( see, for example, \cite{Mellah2013}), it is impossible to obtain periodic solutions in probability or moment for SDEs  due to the effects of diffusion.

Let us recall that there are many topological and analytic methods,
such as  degree theory, fixed point theorems  in studying the
existence of periodic solutions of deterministic differential
equations. But for SDEs, these nonlinear methods do not work in
general, due to lack of compactness. Khasminskii
\cite{Khasminskii2012} defined periodic solutions in the sense of
periodic Markov process. Recently, Ji et al. \cite{JiYi2019} studied periodic probability solutions to be periodic analogs of
stationary measures for stationary Fokker-Planck equations. Chen et
al. \cite{ChenHanLiYang2017} gave a criterion analogous to Halanay's
criterion to prove the existence of periodic solutions in
distribution. Liu and Sun \cite{LiuSun2014} established the
existence of solutions which are almost automorphic in distribution
for some semilinear SDEs with  L\'{e}vy noise. Liu and Wang
\cite{LiuWang2016} obtained  almost periodicity in distribution by
Favard separation method. Tudor \cite{Tudor1992} proved the almost
periodicity of the one-dimensional distributions of solutions under
some hypotheses. Prato and Tudor \cite{Prato1995} showed the
existence of periodic and almost periodic solutions in distribution
of semilinear stochastic equations on a separable Hilbert space. Zhao and Zheng \cite{Zhao2009} and 
Feng and Zhao \cite{FengZhaoZhou2011,FengWuZhao2016} made some interesting 
investigation on periodic solutions for SDEs in a kind of local periodicity.

Additionally, there are a lot literature about  monotone methods and comparison
arguments in deterministic dynamical systems (see, for example, \cite{Smith1996}).
Especially, the upper and lower solutions method is an effective tool
in dealing with periodic solutions
\cite{KaulVatsala1986,LakshmtkanthamLeela1984,LiWangLiLu1999,LiCongLinLiu1999,SedaNietoCera1992}.
In this paper, on the basis of upper and lower solutions, using stochastic
comparison technique we try to prove the existence of periodic
solutions in distribution. Of course comparison principle
\cite{BaiJiang2017,IkedaWatanabe1977,PengZhu2006} is also powerful
to investigate dynamics of nonlinear systems. However, to our
knowledge, it seems the first time to give periodic solutions
in distribution for SDEs by combining these technique. Therefore this
paper provides some way to tackle the existence of periodic
solutions in distribution.

This paper is organized as follows. In Section 2, we review some
concepts and introduce some notations. In Sections 3 and 4, we show
the existence of periodic solutions in distribution of scalar SDEs
and multi-dimensional SDEs, respectively, via a stochastic comparison
approach. In the last section, some examples are given to
illustrate the theoretical results.

\section{Preliminary}

Throughout the paper, let $(\Omega, \{\mathcal{F}_t\}_{t\geq 0}, P)$
be a complete probability space with a filtration
$\{\mathcal{F}_t\}_{t\geq 0}$ satisfying the usual conditions (i.e.
it is right continuous and $\mathcal{F}_0$ contains all $P$-null
sets). $L^2(P,\mathbb{R}^n)$ stands for the space of all
$\mathbb{R}^n$-valued random variables $X$ such that
$E|X|^2=\int_{\Omega}|X|^2dP<\infty$. For $X\in
L^2(P,\mathbb{R}^n)$, let
$\|X\|_2:=\left(\int_{\Omega}|X|^2dP\right)^{1/2}$. Then
$L^2(P,\mathbb{R}^n)$ is a Hilbert space equipped with the norm
$\|\cdot\|_2$.  For an $\mathbb{R}^n$-valued random process
$X=\{X(t):t\in[0,K]\}$, if $\sup_{t\in[0,K]}\|X(t)\|_2<\infty$, then
$X$ is $L^2$-bounded, where $K$ is a positive constant. Then the set
of $L^2$-bounded stochastic processes is a Banach space. Let
$L^2_{\mathcal{F}_t}([0,K];\mathbb{R}^n)$ denote the family of all
${\mathcal{F}_t}$-measurable $C^1([0,K];\mathbb{R}^n)$-valued random
variables $X$ such that $\sup_{t\in[0,K]}\|X\|_2<\infty$. If
$X(k_1)$ and $X(k_2)$ are equal in distribution, we denote it by
$X(k_1)\overset{d}{=}X(k_2)$, where $k_1,k_2$ are two constants. For
two vectors $x=(x_1,x_2,\cdots,x_l), y=(y_1,y_2,\cdots,y_l)$, we say
$x<y$ (or $x\leq y$) if $x_i<y_i$ (or $x_i\leq y_i$),
$i=1,2,\cdots,l$. $a\wedge b$ denotes $\min\{a,b\}$.

Consider the system
\begin{equation}\label{bvp}
x'=h(t,x),
\end{equation}
where $h:[0,\theta]\times\mathbb{R}^l\rightarrow \mathbb{R}^l$ is a
continuous function.

We recall the conception about upper and lower solutions for \eqref{bvp}:

\begin{definition}\label{uls} \cite{BernfeldLakshmtkantham1974}
$C^1$-functions $\alpha, \beta: [0,\theta]\rightarrow\mathbb{R}^l$
are said to be a strictly lower solution and  a strictly upper
solution of system \eqref{bvp}, respectively,
 if $\alpha(t)< \beta(t)$ for $t\in[0, \theta]$ and
\begin{align*}
\left\{\begin{array}{c}
\alpha'< h(t,\alpha),\\
\alpha(0)\leq \alpha(\theta);
\end{array}
\right.~~~~~~~~~~~~~~~~ \left\{\begin{array}{c}
 \beta'> h(t,\beta),\\
\beta(0)\geq\beta(\theta).
\end{array}
\right.
\end{align*}
\end{definition}

For settings bolow, we give  the following function \cite{BuckdahnPend1999}:
\begin{equation}\label{vy}
\varphi_{\epsilon}(y)=\left\{
\begin{array}{ll}
y^2,&y\leq 0,\\
y^2-\f{y^3}{6\epsilon},&0<y\leq 3\epsilon,\\
2\epsilon y-\f{4}{3}\epsilon^2,&y>2\epsilon.
\end{array}\right.
\end{equation}
It is easy to see that $\varphi_{\epsilon}(y)\in C^2(\mathbb{R}),
\varphi_{\epsilon}'(y)\rightarrow 2y^{-}$ uniformly with respect to
$y$, $\varphi_{\epsilon}''(y)\rightarrow 2I_{y\leq 0}$ and
$\varphi_{\epsilon}(y)\rightarrow |y^{-}|^2$ provided that
$\epsilon\rightarrow 0$, where $y^{-}=y\wedge 0$.

\section{Scalar SDEs}
Consider the following scalar SDE
\begin{equation}\label{sde}
dx(t)=f(t,x(t))dt+g(t,x(t))dB(t),
\end{equation}
where $B(t)$ is a one dimensional Gaussian process with values in
$\mathbb{R}$ which is ${\cal{F}}_t$-adapted. Assume the drift term
 and the
diffusion term
$f,g:\mathbb{R}_+\times\mathbb{R}\rightarrow\mathbb{R}$ are
continuous and satisfy
\begin{align*}
{\bf{H}}:~~~~~
&f(t+\theta,x)=f(t,x), g(t+\theta,x)=g(t,x),\\
&|f(t,x)-f(t,y)|\leq M|x-y|, t\in[0,\theta]~~\mbox{for any}~~x,y\in[\alpha,\beta],\\
&|g(t,x)-g(t,y)|^2\leq L|x-y|^2, t\in[0,\theta]~~\mbox{for
any}~~x,y\in[\alpha,\beta],
\end{align*}
where  $\theta, M$ and $L$ are positive constants,
$\alpha(t),\beta(t)$ are strictly lower and upper solutions of
system $x'=f(t,x)$ defined by Definition \ref{uls}.

We need a stochastic version of comparable principles, which is a key for our arguments. For this, define two SDEs by
\begin{equation}\label{alpha}
\begin{split}
d\tilde{\alpha}(t)&=[\alpha'(t)-M(\tilde{\alpha}(t)-\alpha(t))]dt+g(t,\tilde{\alpha}(t))dB(t)\\
&:=f_1(t,\tilde{\alpha}(t))dt+g(t,\tilde{\alpha}(t))dB(t),
\end{split}
\end{equation}
\begin{equation}\label{beta}
\begin{split}
 d\tilde{\beta}(t)&=[\beta'(t)-M(\tilde{\beta}(t)-\beta(t))]dt+g(t,\tilde{\beta}(t))dB(t)\\
 &:=f_2(t,\tilde{\beta}(t))dt+g(t,\tilde{\beta}(t))dB(t)
 \end{split}
\end{equation}
with initial values $\tilde{\alpha}(0)=\alpha(0)+\xi$ and
$\tilde{\beta}(0)=\beta(0)-\xi$, respectively and $t\in[0,\theta]$,
where $\alpha(t),\beta(t)$ are defined by Definition \ref{uls} with
$h(t,x)=f(t,x)$, $\xi>0$ is a sufficiently small constant. Then
there are solutions $\tilde{\alpha}(t), \tilde{\beta}(t)$ for
$t\in[0,\theta]$, which have  the following
 property.

\begin{lemma}\label{abt}
Let $\tilde{\alpha}(t), \tilde{\beta}(t)~~(t\in[0,\theta])$ be
solutions of equations \eqref{alpha} and \eqref{beta} with initial
values $\tilde{\alpha}(0)$ and $\tilde{\beta}(0)$, respectively.
Assume the second condition in H holds. Then
\begin{align*}
\alpha(t)\leq
\tilde{\alpha}(t)\leq\tilde{\beta}(t)\leq\beta(t),~t\in[0,\theta]~a.s.
\end{align*}
\end{lemma}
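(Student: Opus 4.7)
My plan is to establish the three inequalities $\alpha(t)\le\tilde\alpha(t)$, $\tilde\alpha(t)\le\tilde\beta(t)$, and $\tilde\beta(t)\le\beta(t)$ separately. In each case I form the relevant difference process, apply It\^o's formula to the smoothing function $\varphi_\epsilon$ of \eqref{vy}, take expectation, and pass to the limit $\epsilon\to 0$; using the properties $\varphi_\epsilon\to|\cdot^-|^2$, $\varphi_\epsilon'\to 2(\cdot)^-$, and $\varphi_\epsilon''\to 2 I_{\{\cdot\le 0\}}$, the argument will produce an $L^2$ bound of Gronwall type on the negative part of the difference, from which the almost sure comparison follows.

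The middle inequality $\tilde\alpha\le\tilde\beta$ is the cleanest and I will handle it first. Setting $\eta(t):=\tilde\beta(t)-\tilde\alpha(t)$, one has $\eta(0)=\beta(0)-\alpha(0)-2\xi>0$ for sufficiently small $\xi$, and subtracting \eqref{alpha} from \eqref{beta} gives
\begin{equation*}
d\eta=\bigl[(\beta'-\alpha')+M(\beta-\alpha)-M\eta\bigr]\,dt+\bigl[g(t,\tilde\beta)-g(t,\tilde\alpha)\bigr]\,dB.
\end{equation*}
The strict upper and lower solution conditions combined with the Lipschitz bound on $f$ force $\beta'-\alpha'>-M(\beta-\alpha)$, so the $\eta$-independent part of the drift is strictly positive, while the Lipschitz bound on $g$ gives $|g(t,\tilde\beta)-g(t,\tilde\alpha)|^2\le L\eta^2$. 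Plugging these into the It\^o expansion of $\varphi_\epsilon(\eta)$, taking expectation, and letting $\epsilon\to 0$ will yield
\begin{equation*}
E|\eta^-(t)|^2\le L\int_0^t E|\eta^-(s)|^2\,ds,
\end{equation*}
after which Gronwall's inequality forces $\eta^-\equiv 0$ a.s.

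For the two outer inequalities I set $\zeta(t):=\tilde\alpha(t)-\alpha(t)$, so that $d\zeta=-M\zeta\,dt+g(t,\tilde\alpha)\,dB$ with $\zeta(0)=\xi>0$, and run the same $\varphi_\epsilon$--It\^o--Gronwall scheme (the case $\tilde\beta\le\beta$ being fully symmetric). The main obstacle, and where I expect the real technical work of the lemma to lie, is that here the diffusion $g(t,\tilde\alpha)$ does not vanish on the critical set $\{\zeta=0\}$, so in the limit the term $\tfrac{1}{2}\varphi_\epsilon''(\zeta)g^2(t,\tilde\alpha)$ picks up a contribution proportional to $I_{\{\zeta\le 0\}}g^2(t,\alpha)$ that is not automatically controlled by $(\zeta^-)^2$. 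My plan to handle this is to localize by a stopping time that keeps $\tilde\alpha\in[\alpha,\beta]$ (so that the Lipschitz hypothesis on $g$ actually applies), split $g^2(t,\tilde\alpha)\le 2L\zeta^2+2g^2(t,\alpha)$, and then exploit the strongly dissipative drift $-M\zeta$ together with the freedom to take $\xi>0$ arbitrarily small; the explicit form of $\varphi_\epsilon$ is precisely what keeps these bounds tractable when passing to the limit.
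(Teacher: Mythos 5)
Your computation for the middle inequality $\tilde\alpha\le\tilde\beta$ is sound: the diffusion difference $g(t,\tilde\beta)-g(t,\tilde\alpha)$ is Lipschitz-controlled by $|\eta|$, so the quadratic-variation term in the It\^o expansion of $\varphi_\epsilon(\eta)$ is of order $L(\eta^-)^2$ and Gronwall closes. (Note, though, that the Lipschitz bound in H is stated only for arguments in $[\alpha,\beta]$, so logically you need the outer inequalities first, or a localization; also, the paper handles this middle step by simply citing a stochastic comparison theorem rather than running the $\varphi_\epsilon$ computation.)

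The genuine gap is in the outer inequality, and the obstacle you flag is fatal to the route you sketch. For $\zeta=\tilde\alpha-\alpha$ the second-derivative term yields, after $\epsilon\to 0$, a contribution $\int_0^t E\bigl[I_{\{\zeta(s)\le 0\}}\,g^2(s,\tilde\alpha(s))\bigr]\,ds$, and your split $g^2(t,\tilde\alpha)\le 2L\zeta^2+2g^2(t,\alpha)$ leaves a residual $2\int_0^t E\bigl[I_{\{\zeta(s)\le 0\}}g^2(s,\alpha(s))\bigr]\,ds$, which is governed by $P(\zeta(s)\le 0)$ rather than by $E|\zeta^-(s)|^2$; Gronwall cannot iterate on it. The dissipative drift and a small $\xi$ do not rescue this, and shrinking $\xi$ is in fact counterproductive: it makes the initial buffer smaller and the first zero crossing more likely. (For additive noise $g(t,x)\equiv\sigma$, which satisfies the Lipschitz condition in H trivially, $\zeta$ is an Ornstein--Uhlenbeck process started at $\xi$, and it dips below zero with positive probability on any interval.) The paper does not use $\varphi_\epsilon$ at all for this step. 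It instead introduces the hitting times $\tau_m=\inf\{t:\tilde\alpha(t)-\alpha(t)\le 1/m\}$, writes the exact integrating-factor identity $e^{M(\tau_m\wedge\gamma)}\zeta(\tau_m\wedge\gamma)-\zeta(0)=\int_0^{\tau_m\wedge\gamma}e^{Ms}g(s,\tilde\alpha)\,dB(s)$, multiplies by $I_{\{\tau_m\le\gamma\}}$ and takes expectation (asserting the stochastic-integral contribution vanishes), and then compares the resulting bound of $1/m$ on the left with the lower bound $\zeta\xi e^{-M\gamma}>0$ on the right, obtaining a contradiction as $m\to\infty$. This hitting-time/contradiction argument, rather than a Gronwall estimate on the squared negative part, is the missing idea in your sketch; as written, your plan for the outer bounds would fail precisely at the point you identified.
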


\begin{proof}
We only need to prove $\tilde{\alpha}(t)\geq\alpha(t)$~a.s. The
proof of $\tilde{\beta}(t)\leq\beta(t)$~~a.s. is similar. Since
${\alpha}(0)<\tilde{\alpha}(0)$, then $
{\alpha}(t)<\tilde{\alpha}(t)$ for $t\in[0,\tau)$, where
\begin{align*}
\tau=\inf\{t\geq 0: {\alpha}(t)\geq\tilde{\alpha}(t)\}.
\end{align*}
We need to show that $\tau>\theta$. Let $m_0$ be a positive integer
such that $\tilde{\alpha}(0)-{\alpha}(0)\geq \f{1}{m_0}$. For each
integer $m\geq m_0$, define the stopping time
\begin{align*}
\tau_m=\inf\{t\in[0,\tau): \tilde{\alpha}(t)-{\alpha}(t)\leq
{1}/{m}\}.
\end{align*}
Clearly, $\tau_m$ is increasing as $m\rightarrow\infty$. Set
$\tau_{\infty}=\lim\limits_{m\rightarrow\infty}\tau_m$. Hence
$\tau_{\infty}\leq \tau$ a.s. If we can show that
$\tau_{\infty}>\theta$ a.s.,  then $\tau>\theta$ a.s. and
${\alpha}(t)<\tilde{\alpha}(t)$ for $t\in[0,\theta]$ a.s. Suppose
this statement is not true, then there is  a pair of constant
$0<\gamma\leq\theta$ and $0<\zeta<1$ such that
\begin{align*}
P\{\tau_\infty\leq\gamma\}>\zeta.
\end{align*}
Hence there is an integer $m_1\geq m_0$ such that
\begin{align*}
P\{\tau_m\leq\gamma\}\geq\zeta~~\mbox{for all}~~m\geq m_1.
\end{align*}
Integrating equation \eqref{alpha} from $0$ to $\tau_m\wedge \gamma$
yields
\begin{align*}
&(\tilde{\alpha}(\tau_m\wedge\gamma)-{\alpha}(\tau_m\wedge\gamma))-(\tilde{\alpha}(0)-{\alpha}(0))\\
&=\int_0^{\tau_m\wedge\gamma}-M(\tilde{\alpha}(t)-{\alpha}(t))dt+\int_0^{\tau_m\wedge\gamma}g(t,\tilde{\alpha}(t))dB(t).
\end{align*}
Then
\begin{align*}
e^{M(\tau_m\wedge\gamma)}(\tilde{\alpha}(\tau_m\wedge\gamma)-{\alpha}(\tau_m\wedge\gamma))-(\tilde{\alpha}(0)-{\alpha}(0))
=\int_0^{\tau_m\wedge\gamma}e^{Mt}g(t,\tilde{\alpha}(t))dB(t).
\end{align*}
Multiplying $I_{\{\tau_m\leq \gamma\}}$ on the both side of it and
taking expectation yield
\begin{align*}
&E[I_{\{\tau_m\leq
\gamma\}}(\tilde{\alpha}(\tau_m\wedge\gamma)-{\alpha}(\tau_m\wedge\gamma))]=
E[ I_{\{\tau_m\leq
\gamma\}}(\tilde{\alpha}(0)-{\alpha}(0))e^{-M\tau_m}].
\end{align*}
 Note that the left side
\begin{align*}
E[I_{\{\tau_m\leq
\gamma\}}(\tilde{\alpha}(\tau_m\wedge\gamma)-{\alpha}(\tau_m\wedge\gamma))]\leq\f{1}{m},
\end{align*}
while the right side
\begin{align*}
E[ I_{\{\tau_m\leq
\gamma\}}(\tilde{\alpha}(0)-{\alpha}(0))e^{-M\tau_m}] \geq E[
I_{\{\tau_m\leq
\gamma\}}(\tilde{\alpha}(0)-{\alpha}(0))]e^{-M\gamma} \geq\zeta\xi
e^{-M\gamma}.
\end{align*}
Letting $m\rightarrow\infty$ leads to a contradiction that $0\geq
\zeta\xi e^{-M\gamma}>0$. Therefore we must have
$\tau_{\infty}>\theta$ a.s.

As to the result
$\tilde{\alpha}(t)\leq\tilde{\beta}(t),~t\in[0,\theta]$ a.s., it is
obviously true through comparison theorem for stochastic
differential equations under the second condition in H. This
completes the proof.
\end{proof}

The following is the first main result about periodic solutions:
\begin{theorem}\label{1th}
Let the hypothesis $H$ hold, where $M-L>\f{\ln 2}{2\theta}$ is
satisfied, and assume there are strictly lower and upper solutions
$\alpha$ and $\beta$ of system $x'=f(t,x)$ with $\alpha<\beta$. Then
there exist monotone sequences $\{\tilde{\alpha}_n(t)\},
\{\tilde{\beta}_n(t)\}$ with $\alpha_0=\tilde{\alpha},
\beta_0=\tilde{\beta}$ and functions $a(t), b(t)$ such that
$\lim\limits_{n\rightarrow\infty}\tilde{\alpha}_n(t)=a(t),
\lim\limits_{n\rightarrow\infty}\tilde{\beta}_n(t)=b(t)$ and
\begin{align*}
\tilde{\alpha}=\tilde{\alpha}_0\leq\tilde{\alpha}_1\leq\cdots\leq\tilde{\alpha}_n\leq
a\leq u\leq
b\leq\tilde{\beta}_n\leq\cdots\leq\tilde{\beta}_1\leq\tilde{\beta}_0=\tilde{\beta}
\end{align*}
on $[0,\theta]$, where $u$ is a solution of system \eqref{sde} such
that $\tilde{\alpha}(t)\leq u(t)\leq\tilde{\beta}(t)$ on
$[0,\theta]$ a.s., and $u(0)\overset{d}{=}u(\theta)$. Therefore, there is a $\theta-$periodic solution $x^*(t)$ in
distribution of system \eqref{sde}.
\end{theorem}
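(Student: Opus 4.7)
The plan is to run a monotone stochastic iteration of linearized SDEs sandwiched between $\tilde{\alpha}$ and $\tilde{\beta}$, pass to the $L^2$-limit to obtain a solution $u$ of \eqref{sde}, and then use the gap condition $M - L > \ln 2/(2\theta)$ to upgrade the $L^2$-convergence into the periodicity identity $u(0) \overset{d}{=} u(\theta)$. Concretely, I would set $\tilde{\alpha}_0 := \tilde{\alpha}$, $\tilde{\beta}_0 := \tilde{\beta}$ and, for $n \geq 1$, define $\tilde{\alpha}_n$ on $[0, \theta]$ as the unique strong solution of
\begin{equation*}
d\tilde{\alpha}_n(t) = [f(t, \tilde{\alpha}_{n-1}(t)) - M(\tilde{\alpha}_n(t) - \tilde{\alpha}_{n-1}(t))]\,dt + g(t, \tilde{\alpha}_n(t))\,dB(t),
\end{equation*}
with the compatibility initial datum $\tilde{\alpha}_n(0) := \tilde{\alpha}_{n-1}(\theta)$ (so any limit is automatically periodic), and $\tilde{\beta}_n$ symmetrically. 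Existence and uniqueness of each iterate are standard from the Lipschitz conditions in $\mathbf{H}$ after truncation outside $[\alpha, \beta]$. For the monotonicity $\tilde{\alpha}_{n-1} \leq \tilde{\alpha}_n$ a.s., I would rerun the stopping-time argument of Lemma \ref{abt} on $D_n := \tilde{\alpha}_n - \tilde{\alpha}_{n-1}$, using the strict lower-solution inequality $\alpha' < f(t, \alpha)$ at the base step and, for $n \geq 2$, the inductively established non-negativity of $D_{n-1}$ together with the $-M$ damping. Symmetric reasoning gives $\tilde{\beta}_n \leq \tilde{\beta}_{n-1}$ and a cross-comparison yields $\tilde{\alpha}_n \leq \tilde{\beta}_n$, producing the full chain of inequalities in the statement.

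Since the sequences are monotone and uniformly dominated by the deterministic functions $\alpha, \beta$, monotone/dominated convergence in $L^2(P)$ yields limits $a(t), b(t)$, and a standard SDE $L^2$-stability estimate promotes this into uniform convergence in $t \in [0, \theta]$. Passing to the limit in the integral form of the iteration via It\^o isometry and Lipschitz continuity from $\mathbf{H}$, the telescoping correction $M(\tilde{\alpha}_n - \tilde{\alpha}_{n-1})$ vanishes in $L^2$ and gives $da = f(t, a)\,dt + g(t, a)\,dB$; analogously for $b$. Hence both limits solve \eqref{sde}, and we take $u := a$ (any process in $[a, b]$ solving \eqref{sde} would do).

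The main obstacle is extracting $u(0) \overset{d}{=} u(\theta)$. I would apply It\^o's formula to $e^{2Mt}|\tilde{\alpha}_{n+1}(t) - \tilde{\alpha}_n(t)|^2$: the weight compensates the $-2M$ damping, the quadratic-variation term contributes $L|\tilde{\alpha}_{n+1} - \tilde{\alpha}_n|^2$ by $\mathbf{H}$, and the drift cross-terms $2D_{n+1}[f(t,\tilde{\alpha}_n) - f(t,\tilde{\alpha}_{n-1})]$ and $2M D_{n+1} D_n$ are absorbed via Young's inequality. Taking expectation on $[0, \theta]$ should produce a recursion of the shape
\begin{equation*}
E|\tilde{\alpha}_{n+1}(\theta) - \tilde{\alpha}_n(\theta)|^2 \leq e^{-2(M - L)\theta}\,E|\tilde{\alpha}_{n+1}(0) - \tilde{\alpha}_n(0)|^2,
\end{equation*}
and since $\tilde{\alpha}_{n+1}(0) = \tilde{\alpha}_n(\theta)$, the hypothesis $M - L > \ln 2/(2\theta)$ turns this into a contraction with ratio strictly less than $1/2$. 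Consequently $\{\tilde{\alpha}_n(0)\}$ is Cauchy in $L^2$, whence $a(0) = a(\theta)$ in $L^2$ and in particular distributionally. The delicate step is driving the rate down to the full $e^{-2(M-L)\theta}$: the softer comparison used in Lemma \ref{abt} would only supply $e^{-(2M-L)\theta}$, whereas absorbing the $M$-cross-term by Young without losing the contraction is exactly what spends the $\ln 2$ margin built into the hypothesis. Once $u(0) \overset{d}{=} u(\theta)$ is secured, the $\theta$-periodicity of $f, g$ in $\mathbf{H}$ permits concatenation of shifted copies of $u$ into the desired $\theta$-periodic-in-distribution solution $x^*$ of \eqref{sde} on $\mathbb{R}_+$.
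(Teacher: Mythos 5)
Your proposal captures the right architecture—a monotone iteration of linearized SDEs sandwiched between the randomized lower and upper solutions $\tilde{\alpha}, \tilde{\beta}$, with the gap condition $M-L>\f{\ln 2}{2\theta}$ invoked to manufacture the periodic initial datum—but the place where you spend that gap condition is different from the paper's, and as written your scheme has two genuine gaps.

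First, threading the endpoint of one iterate into the start of the next, $\tilde{\alpha}_n(0):=\tilde{\alpha}_{n-1}(\theta)$, breaks the pathwise monotonicity at the very first step. A stopping-time argument in the spirit of Lemma~\ref{abt} for $D_1=\tilde{\alpha}_1-\tilde{\alpha}_0$ needs $D_1(0)\geq 0$ a.s., i.e.\ $\tilde{\alpha}(\theta)\geq\tilde{\alpha}(0)$ a.s.\ But $\tilde{\alpha}$ solves a full diffusion on $[0,\theta]$, and its value at time $\theta$ falls below its initial value with positive probability; the deterministic inequality $\alpha(0)\leq\alpha(\theta)$ does not transfer pathwise to the randomized $\tilde{\alpha}$. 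The paper avoids this by using $M-L>\f{\ln 2}{2\theta}$ in a different loop: for each \emph{frozen} $\eta\in[\tilde{\alpha},\tilde{\beta}]$, the Poincar\'e map $\tilde{T}:u(0)\mapsto u(\theta)$ of the linearized equation \eqref{FZsde} is an $L^2$-contraction with ratio $2e^{-2(M-L)\theta}<1$, so it has a fixed point $\gamma^*(\eta)$. The map $A(\eta)$ is \emph{defined} as the solution started from that fixed point, so every iterate $\tilde{\alpha}_n=A(\tilde{\alpha}_{n-1})$ is already periodic in distribution; the monotonicity $\tilde{\alpha}\leq A(\tilde{\alpha})$ and $A(\eta_1)\leq A(\eta_2)$ is then proved separately, via the $\varphi_\epsilon$/stopping-time machinery on approximating sequences $\tilde{\alpha}_{n0}$, rather than by reusing the forward time boundary.

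Second, the claimed recursion $E|\tilde{\alpha}_{n+1}(\theta)-\tilde{\alpha}_n(\theta)|^2\leq e^{-2(M-L)\theta}E|\tilde{\alpha}_{n+1}(0)-\tilde{\alpha}_n(0)|^2$ does not hold for your iterates. In the paper's Step~1 the two processes $u,v$ being compared solve the \emph{same} linearized SDE (same $\eta$) from different initial data, so the drift difference is exactly $-M(u-v)$ and the $f$-terms cancel; only the $g$-Lipschitz term survives, yielding the clean factor $2e^{-2(M-L)\theta}$. In your iteration $\tilde{\alpha}_{n+1}$ and $\tilde{\alpha}_n$ are linearized at $\tilde{\alpha}_n$ and $\tilde{\alpha}_{n-1}$ respectively, so the It\^o expansion of $|D_{n+1}|^2$ leaves cross-terms $2D_{n+1}\big[f(t,\tilde{\alpha}_n)-f(t,\tilde{\alpha}_{n-1})\big]+2M\,D_{n+1}D_n$. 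After Young these produce a forcing term of order $\int_0^\theta E|D_n(s)|^2\,ds$ on the right side, not a pure decay of $E|D_{n+1}(0)|^2$; the constant in front of this forcing scales like $M/\epsilon$, so it cannot be dominated by the $(M-L)$-gap, and the resulting two-index inequality does not close under the stated hypothesis. The $\ln 2$ margin is spent, in the paper, on the parallelogram factor $2$ in the Poincar\'e-map contraction and nothing else.
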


\begin{proof}
To prove the first result of this theorem, we divide it into three
steps.
\begin{itemize}
\item Step 1:  Construct an auxiliary equation, and prove that it
has a unique solution.
\item Step 2: From Step 1, we define a mapping, and show it has two order 
properties.
\item Step 3: From Step 2, we can find monotone sequences, and so
the result holds.
\end{itemize}

{\bf Step 1:}
 Consider the
following equation
\begin{equation}\label{FZsde}
du=(f(t,\eta)-M(u-\eta))dt+g(t,u)dB(t),
\end{equation}
where $\eta\in[\tilde{\alpha},\tilde{\beta}]=\{u\in
L^2([0,\theta];\mathbb{R}),\tilde{\alpha}\leq
u\leq\tilde{\beta}~~\mbox{a.s.}\}$.
 It is easy to see that
\begin{align*}
u(t)=u(0)e^{-Mt}+\int_0^te^{-M(t-s)}(f(s,{\eta}(s))+M{\eta}(s))ds+\int_0^te^{-M(t-s)}g(s,u(s))d{B}(s)\triangleq
Tu,
\end{align*}
where $T: \{u\in L^2([0,\theta];\mathbb{R}),\tilde{\alpha}\leq
u\leq\tilde{\beta}~~\mbox{a.s.}\}\rightarrow \{u\in
L^2([0,\theta];\mathbb{R}),\tilde{\alpha}\leq
u\leq\tilde{\beta}~~\mbox{a.s.}\}$. Obviously, there is a unique
solution $u(t), t\in[0,\theta]$ of system \eqref{FZsde} from the
continuity of its coefficients. That is to say, for each
$\eta\in[\tilde{\alpha},\tilde{\beta}]$ and the initial value
$\gamma\in L^2([0,\theta];\mathbb{R}), \tilde{\alpha}(0)\leq
\gamma\leq\tilde{\beta}(0)$, there exists a solution
$\tilde{\alpha}(t)\leq u_{\gamma,\eta}(t)\leq\tilde{\beta}(t)$.
Moreover, we can claim that for each fixed
$\eta\in[\tilde{\alpha},\tilde{\beta}]$, there exists a unique
solution in distribution of \eqref{FZsde} with
$u(0)\overset{d}{=}u(\theta)$. In fact,
\begin{align*}
E|u(t)-v(t)|^2&=E\left|(u(0)-v(0))e^{-M t}+\int_0^t e^{-M(t-s)}(g(s,u(s))-g(s,v(s)))dB(s)\right|^2\\
&\leq
2E\left|(u(0)-v(0))e^{-M t}\right|^2+2E\int_0^t e^{-2M(t-s)}|g(s,u(s))-g(s,v(s))|^2ds\\
&\leq 2e^{-2M t}E\left|(u(0)-v(0))\right|^2+2L\int_0^t
e^{-2M(t-s)}E|u(s)-v(s)|^2ds.
\end{align*}
This together with Gronwall's inequality implies that
\begin{align*}
E|u(t)-v(t)|^2\leq 2e^{-2(M-L)t}E\left|(u(0)-v(0))\right|^2.
\end{align*}
Then
\begin{align*}
E|u(\theta)-v(\theta)|^2\leq
2e^{-2(M-L)\theta}E\left|(u(0)-v(0))\right|^2.
\end{align*}
Define $\tilde{T}u(0)=Tu(\theta)$. Noting that
$\tilde{\alpha}(0)\leq\tilde{\alpha}(\theta),
\tilde{\beta}(0)\geq\tilde{\beta}(\theta)$ and by Lemma \ref{abt} we
have $\tilde{T}: {[\tilde{\alpha}(0),~\tilde{\beta}(0)]}\rightarrow
{[\tilde{\alpha}(0),~\tilde{\beta}(0)]}$. Together with
$2e^{-2(M-L)\theta}<1$, i.e. $M-L>\f{\ln 2}{2\theta}$, we know that 
$\tilde{T}$ is contract. Therefore, according to Banach's
contraction principle, there is a $\gamma^*$, such that
$\tilde{\alpha}(t)\leq u_{\gamma^*,\eta}^*(t)\leq\tilde{\beta}(t)$
and
$u_{\gamma^*,\eta}^*(0)\overset{d}{=}u_{\gamma^*,\eta}^*(\theta)$ 
of system \eqref{FZsde}.

{\bf Step 2:} Now define a mapping $A(\eta)=u$, where $u$ is the
unique solution of \eqref{FZsde} with $u(0)\overset{d}{=}u(\theta)$.
The mapping $A$ is continuous. In fact, for any
$\eta_1,\eta_2\in[\tilde{\alpha},\tilde{\beta}]$ and
$\|\eta_1-\eta_2\|_2<\epsilon$, then
\begin{align*}
&E|A(\eta_1)-A(\eta_2)|^2\\
&=E\left|\int_0^te^{-M(t-s)}(f(s,{\eta_1}(s)-f(s,{\eta_2}(s))+M({\eta_1}(s)-{\eta_2}(s))ds+\int_0^te^{-M(t-s)}(g(s,u_1(s))-g(s,u_2(s)))d{B}(s)\right|^2\\
&\leq
3E\left|\int_0^te^{-M(t-s)}(f(s,{\eta_1}(s)-f(s,{\eta_2}(s))ds\right|^2+3E\left|\int_0^te^{-M(t-s)}M({\eta_1}(s)-{\eta_2}(s))ds\right|^2\\
&\quad\quad+3E\left|\int_0^te^{-M(t-s)}(g(s,u_1(s))-g(s,u_2(s)))d{B}(s)\right|^2\\
&\leq 3\int_0^te^{-2M(t-s)}ds E\int_0^t|f(s,{\eta_1}(s))-f(s,{\eta_2}(s))|^2ds+3\int_0^te^{-2M(t-s)}ds E\int_0^tM^2|{\eta_1}(s)-{\eta_2}(s)|^2ds\\
&\quad\quad+3E\int_0^te^{-2M(t-s)}|g(s,u_1(s))-g(s,u_2(s))|^2ds\\
&\leq3ME\int_0^t|{\eta_1}(s)-{\eta_2}(s)|^2ds+3L\int_0^tE|A(\eta_1)-A(\eta_2)|^2ds.
\end{align*}
It is easy to see the result is true through Gronwall's inequality.
Moreover $A$ satisfies
\begin{itemize}
\item[](i)~$\tilde{\alpha}\leq A(\tilde{\alpha}), \tilde{\beta}\geq
A(\tilde{\beta})$~~a.s.;
\item[](ii)~for $\eta_1,\eta_2\in[\tilde{\alpha},\tilde{\beta}]$,
 $\eta_1\leq \eta_2$~~a.s. implies $A\eta_1\leq A\eta_2$ a.s., i.e., the mapping $A$ possesses a monotone property on the segment
$[\tilde{\alpha}, \tilde{\beta}]$.
\end{itemize}

To prove (i), let $\varepsilon_n>0$ be a strictly decreasing
sequence
 with $\lim\limits_{n\rightarrow\infty}\varepsilon_n=0$. Define
 $f_1^{\varepsilon_n}(t,x)=f_1(t,x)-\varepsilon_n$ and
\begin{align*}
d\tilde{\alpha}_{n0}(t)&=f_1^{\varepsilon_n}(t,\tilde{\alpha}_{n0}(t))dt+g(t,\tilde{\alpha}_{n0}(t))dB(t)\\
&=(\alpha'(t)-M(\tilde{\alpha}_{n0}(t)-\alpha(t))-\varepsilon_n)dt+g(t,\tilde{\alpha}_{n0}(t))dB(t)
 \end{align*}
with the same initial value as the initial value of
$\tilde{\alpha}(t)$. Obviously,
\begin{equation}\label{fn}
 f_1^{\varepsilon_1}(t,x)<f_1^{\varepsilon_2}(t,x)<\cdots<f_1^{\varepsilon_n}(t,x)<f_1(t,x).
\end{equation}
Comparison theorems and \eqref{fn} imply that
\begin{align*}
\tilde{\alpha}_{10}(t)\leq\tilde{\alpha}_{20}(t)\leq\cdots\tilde{\alpha}_{n0}(t)\leq\tilde{\alpha}(t)~~\mbox{a.s.}~~\mbox{for
} ~~t\in[0,\theta].
\end{align*}
We can show that
\begin{equation}\label{limitalphaP}
\lim\limits_{n\rightarrow\infty}\tilde{\alpha}_{n0}(t)=\tilde{\alpha}(t)~~\mbox{a.s.}~~\mbox{for
}~~t\in[0,\theta].
\end{equation}
For this, let
\begin{equation}\label{limitalpha}
\tilde{\alpha}^0(t)\triangleq\lim\limits_{n\rightarrow\infty}\tilde{\alpha}_{n0}(t)~~\mbox{a.s.}~~\mbox{for
}~~t\in[0,\theta].
\end{equation}
We need to show that $\tilde{\alpha}^0(t)=\tilde{\alpha}(t)$ a.s.
for $t\in[0,\theta]$, which reduce to check that
$\tilde{\alpha}^0(t)$ satisfies \eqref{alpha} according to the
uniqueness of the strong solution.

For this aim, we first prove that $\tilde{\alpha}_{n0}(t)$ converges
to $\tilde{\alpha}^0(t)$ uniformly in $t\in[0,\theta]$ a.s. as
$n\rightarrow\infty$. Note that
\begin{align*}
&~~~~\sup\limits_{0\leq s\leq
t}|\tilde{\alpha}_{(n+1)0}(s)-\tilde{\alpha}_{n0}(s)|^2\\
&=\sup\limits_{0\leq s\leq t}\left|
\int_0^s(f_1^{\varepsilon_{n+1}}(u,\tilde{\alpha}_{(n+1)0}(u))-f_1^{\varepsilon_n}(u,\tilde{\alpha}_{n0}(u)))du
+\int_0^s(g(u,\tilde{\alpha}_{(n+1)0}(u))-g(u,\tilde{\alpha}_{n0}(u)))dB(u)\right|^2\\
&=\sup\limits_{0\leq s\leq
t}\left|(\varepsilon_n-\varepsilon_{n+1})s-
M\int_0^s[(\tilde{\alpha}_{(n+1)0}(u)-\alpha(u))-(\tilde{\alpha}_{n0}(u)-\alpha(u))]du\right.\\
&~~~~~~~~~~~~~~~~~~~~~~~~~~~~~~~~~~~~~~~~~~~~~~~\left.
+\int_0^s(g(u,\tilde{\alpha}_{(n+1)0}(u))-g(u,\tilde{\alpha}_{n0}(u)))dB(u)\right|^2\\
&\leq3|\varepsilon_n-\varepsilon_{n+1}|^2\theta^2+3M^2\left(\int_0^t|\tilde{\alpha}_{(n+1)0}(u)-\tilde{\alpha}_{n0}(u)|du\right)^2+3\sup\limits_{0\leq
s\leq
t}\left|\int_0^s(g(u,\tilde{\alpha}_{(n+1)0}(u))-g(u,\tilde{\alpha}_{n0}(u)))dB(u)\right|^2\\
&\leq3|\varepsilon_n-\varepsilon_{n+1}|^2\theta^2+3M^2\theta\int_0^t|\tilde{\alpha}_{(n+1)0}(u)-\tilde{\alpha}_{n0}(u)|^2du+3\sup\limits_{0\leq
s\leq
t}\left|\int_0^s(g(u,\tilde{\alpha}_{(n+1)0}(u))-g(u,\tilde{\alpha}_{n0}(u)))dB(u)\right|^2.
\end{align*}
Then
\begin{align*}
&E\sup\limits_{0\leq s\leq
t}|\tilde{\alpha}_{(n+1)0}(s)-\tilde{\alpha}_{n0}(s)|^2\\
&\leq3|\varepsilon_n-\varepsilon_{n+1}|^2\theta^2+3M^2\theta
E\int_0^t|\tilde{\alpha}_{(n+1)0}(u)-\tilde{\alpha}_{n0}(u)|^2du+3E\sup\limits_{0\leq
s\leq
t}\left|\int_0^s(g(u,\tilde{\alpha}_{(n+1)0}(u))-g(u,\tilde{\alpha}_{n0}(u)))dB(u)\right|^2\\
&\leq3|\varepsilon_n-\varepsilon_{n+1}|^2\theta^2+3M^2\theta
E\int_0^t|\tilde{\alpha}_{(n+1)0}(u)-\tilde{\alpha}_{n0}(u)|^2du
+12E\int_0^t|g(u,\tilde{\alpha}_{(n+1)0}(u))-g(u,\tilde{\alpha}_{n0}(u))|^2du\\
&\leq3|\varepsilon_n-\varepsilon_{n+1}|^2\theta^2+(3M^2\theta+12L)\int_0^tE|\tilde{\alpha}_{(n+1)0}(u)-\tilde{\alpha}_{n0}(u)|^2du\\
&\leq3|\varepsilon_n-\varepsilon_{n+1}|^2\theta^2+(3M^2\theta+12L)\int_0^tE\sup\limits_{0\leq
s\leq u}|\tilde{\alpha}_{(n+1)0}(s)-\tilde{\alpha}_{n0}(s)|^2du.
\end{align*}
Applying Gronwall's inequality yields
\begin{equation}\label{Ca}
E\sup\limits_{0\leq s\leq
t}|\tilde{\alpha}_{(n+1)0}(s)-\tilde{\alpha}_{n0}(s)|^2\leq
3|\varepsilon_n-\varepsilon_{n+1}|^2\theta^2e^{(3M^2\theta+12L)\theta}.
\end{equation}
The property of $\varepsilon_n$ tells us that there is an $N_1>0$
that for $n\geq N_1$,
\begin{equation}\label{epsiolnC}
\varepsilon_n-\varepsilon_{n+1}\leq\f{1}{\sqrt{3\theta^2e^{(3M^2\theta+12L)\theta}8^n}}.
\end{equation}
Substituting \eqref{epsiolnC} into \eqref{Ca}, we get
\begin{align*}
E\sup\limits_{0\leq s\leq
\theta}|\tilde{\alpha}_{(n+1)0}(s)-\tilde{\alpha}_{n0}(s)|^2\leq
\f{1}{8^n}~~\mbox{for}~~n\geq N_1.
\end{align*}
By Chebyshev's inequality we have
\begin{align*}
\sum\limits_{n=N_1}^{\infty}P\left(\sup\limits_{0\leq s\leq
\theta}|\tilde{\alpha}_{(n+1)0}(s)-\tilde{\alpha}_{n0}(s)|>2^{-(n+1)}\right)&\leq\sum\limits_{n=N_1}^{\infty}\f{E\sup\limits_{0\leq
s\leq
\theta}|\tilde{\alpha}_{(n+1)0}(s)-\tilde{\alpha}_{n0}(s)|^2}{2^{-2(n+1)}}\leq\sum\limits_{n=N_1}^{\infty}4\f{1}{2^n}<\infty.
\end{align*}
In view of the well-known Borel-Cantelli lemma, one sees that for
almost all $\omega\in\Omega$
\begin{equation}\label{alphanlimits}
\sup\limits_{0\leq s\leq
\theta}|\tilde{\alpha}_{(n+1)0}(s)-\tilde{\alpha}_{n0}(s)|\leq
2^{-(n+1)}.
\end{equation}
It tells us that there exists an $N_2(\omega)\geq N_1$, for all
$\omega\in\Omega$ excluding a $P$-null set, for which
\eqref{alphanlimits} holds whenever $n\geq N_2$. Consequently,
$\tilde{\alpha}_{n0}(t)$ uniformly converges to
$\tilde{\alpha}^0(t)$ and $\tilde{\alpha}^0(t)$ is continuous on
$[0, \theta]$ a.s.

Define
\begin{align*}
T_N\triangleq\inf\{t>0:
|\tilde{\alpha}^0(t)|>N~~\mbox{or}~~|\tilde{\alpha}_{n0}(t)|>N\}\wedge
N, ~~\mbox{for every}~~N>0.
\end{align*}
In terms of \eqref{limitalpha} and Lebesgue dominated convergence
theorem, we have
\begin{align*}
&\lim\limits_{n\rightarrow\infty}\int_0^{t\wedge
T_N}f_1^{\varepsilon_n}(s, \tilde{\alpha}_{n0}(s))ds=\int_0^{t\wedge
T_N}f_1(s,
\tilde{\alpha}^0(s))ds~~a.s.\\
&\lim\limits_{n\rightarrow\infty}E\int_0^{t\wedge
T_N}|g(s,\tilde{\alpha}_{n0}(s))-g(s,\tilde{\alpha}^0(s))|^2ds=0,
\end{align*}
which together with Proposition 3.2 in \cite{KaratzasShreve1991}
implies that
\begin{align*}
\lim\limits_{n\rightarrow\infty}\int_0^{t\wedge
T_N}g(s,\tilde{\alpha}_{n0}(s))dB(s)=\int_0^{t\wedge
T_N}g(s,\tilde{\alpha}^0(s))dB(s)~~\mbox{in}~~L^2.
\end{align*}
Hence
\begin{align*}
\tilde{\alpha}^0(t\wedge T_N)=\tilde{\alpha}(0)+\int_0^{t\wedge
T_N}f_1(s, \tilde{\alpha}^0(s))ds+\int_0^{t\wedge
T_N}g(s,\tilde{\alpha}^0(s))dB(s).
\end{align*}
Note that $\lim\limits_{N\rightarrow\infty}T_N=\theta$, then
\begin{align*}
\tilde{\alpha}^0(t)=\tilde{\alpha}(0)+\int_0^{t}f_1(s,
\tilde{\alpha}^0(s))ds+\int_0^{t}g(s,\tilde{\alpha}^0(s))dB(s).
\end{align*}
Therefore, \eqref{limitalphaP} is true. It tells us that for
$\varsigma=\f{1}{2}\min\limits_{t\in[0,\theta]}\{\tilde{\alpha}(t)-\alpha(t)\}>0$,
there is a $\tilde{N}_0$ such that for $n>\tilde{N}_0$,
$|\tilde{\alpha}_{n0}(t)-\tilde{\alpha}(t)|<\varsigma$~~a.s., which
implies that
\begin{equation}\label{tala}
\tilde{\alpha}_{n0}(t)>\alpha(t)~~\mbox{a.s. for}~~ n>\tilde{N}_0.
\end{equation}
Since  $\tilde{\alpha}^0(t)$ is a modification of the solution
$\tilde{\alpha}(t)$ and $\tilde{\alpha}_{n0}(t)$ uniformly converges
to $\tilde{\alpha}^0(t)$, then in order to  verify that
\begin{equation}\label{Aalpha}
\tilde{\alpha}\leq A(\tilde{\alpha})~~\mbox{a.s.},
\end{equation}
we only need to  prove that
\begin{align*}
\tilde{\alpha}_{n0}\leq A(\tilde{\alpha}_{n0})~~\mbox{ a.s.~ for
}~~n>\tilde{N}_0.
\end{align*}
 For this, set $A(\tilde{\alpha}_{n0})=\tilde{\alpha}_{n1}$, where
$\tilde{\alpha}_{n1}$ is the unique solution of \eqref{FZsde} with
$\eta=\tilde{\alpha}_{n0}$. That is
\begin{align*}
d\tilde{\alpha}_{n1}=[f(t,\tilde{\alpha}_{n0})-M(\tilde{\alpha}_{n1}-\tilde{\alpha}_{n0})]dt+g(t,\tilde{\alpha}_{n1})dB(t).
\end{align*}
Set $Y_{n\alpha}(t)=\tilde{\alpha}_{n1}(t)-\tilde{\alpha}_{n0}(t)$.
Then
\begin{align*}
dY_{n\alpha}(t)=[f(t,\tilde{\alpha}_{n0})-M(\tilde{\alpha}_{n1}-\tilde{\alpha}_{n0})-f_1^{\varepsilon_n}(t,\tilde{\alpha}_{n0})]dt+(g(t,\tilde{\alpha}_{n1})-g(t,\tilde{\alpha}_{n0}))dB(t).
\end{align*}
Define the stopping time
\begin{equation}\label{stoppingTime}
\begin{split}
\tau_{\alpha}&\triangleq\inf\{t>0:
\tilde{\alpha}_{n0}(t)>\tilde{\alpha}_{n1}(t), n>\tilde{N}_0\}.
\end{split}
\end{equation}
Obviously,  $\tau_{\alpha}\leq\theta$. In order to verify the
conclusion, we have to show that
\begin{equation}\label{tau}
P(\{\tau_{\alpha}<\theta\})=0.
\end{equation}
For this purpose, let
\begin{align*}
\kappa_{\alpha}\triangleq\inf\{t>\tau_{\alpha}: Z_{n\alpha}(t,
\omega)\triangleq f_1^{\varepsilon_n}(t,
\tilde{\alpha}_{n0}(t))-[f(t,\tilde{\alpha}_{n0}(t))-M(\tilde{\alpha}_{n1}(t)-Y^{-}_{n\alpha}(t)-\tilde{\alpha}_{n0}(t))]>0,
n>\tilde{N}_0\}.
\end{align*}
It is easy to see that $Z_{n\alpha}(t,\omega)$ is
$\mathcal{F}_t$-adapted and its sample path  is continuous. Then
$\kappa_{\alpha}$ is a $\mathcal{F}_t$-stopping time.

We claim that
\begin{equation}\label{kappaStoppingTime}
\kappa_{\alpha}>\tau_{\alpha}~~\mbox{on}~~\{\tau_{\alpha}<\theta\}.
\end{equation}
From the definition of $\kappa_{\alpha}$, we know that
$\kappa_{\alpha}\geq \tau_{\alpha}$ and
\begin{equation}\label{aa}
f_1^{\varepsilon_n}(\kappa_{\alpha},
\tilde{\alpha}_{n0}(\kappa_{\alpha}))-[f(\kappa_{\alpha},\tilde{\alpha}_{n0}(\kappa_{\alpha}))
-M(\tilde{\alpha}_{n1}(\kappa_{\alpha})-Y^{-}_{n\alpha}(\kappa_{\alpha})-\tilde{\alpha}_{n0}(\kappa_{\alpha}))]\geq0,
n>\tilde{N}_0.
\end{equation}
Hence if we show that the case $\kappa_{\alpha}=\tau_{\alpha}$ is
impossible, then  \eqref{kappaStoppingTime} is true. Suppose
$\kappa_{\alpha}=\tau_{\alpha}$ on $\{\tau_{\alpha}<\theta\}$. Since
$\tilde{\alpha}_{n0}(\tau_{\alpha})=\tilde{\alpha}_{n1}(\tau_{\alpha})$,
we have
$Y^{-}_{n\alpha}(\tau_{\alpha})=Y^{-}_{n\alpha}(\kappa_{\alpha})=0$.
This together with \eqref{tala} and the hypothesis  H for $f$
implies
\begin{equation*}\label{first}
\begin{split}
&~~~~f_1^{\varepsilon_n}(\kappa_{\alpha},
\tilde{\alpha}_{n0}(\kappa_{\alpha}))-[f(\kappa_{\alpha},\tilde{\alpha}_{n0}(\kappa_{\alpha}))-M(\tilde{\alpha}_{n1}(\kappa_{\alpha})-Y^{-}_{n\alpha}(\kappa_{\alpha})-\tilde{\alpha}_{n0}(\kappa_{\alpha}))]\\
&=\alpha'(\kappa_{\alpha})-M(\tilde{\alpha}_{n0}(\kappa_{\alpha})-\alpha(\kappa_{\alpha}))-\varepsilon_n-[f(\kappa_{\alpha},\tilde{\alpha}_{n0}(\kappa_{\alpha}))-M(\tilde{\alpha}_{n1}(\kappa_{\alpha})-Y^{-}_{n\alpha}(\kappa_{\alpha})-\tilde{\alpha}_{n0}(\kappa_{\alpha}))]\\
&\leq f(\kappa_{\alpha},
\alpha(\kappa_{\alpha}))-M(\tilde{\alpha}_{n0}(\kappa_{\alpha})-\alpha(\kappa_{\alpha}))-\varepsilon_n-f(\kappa_{\alpha},\tilde{\alpha}_{n0}(\kappa_{\alpha}))\\
&\leq
-M(\alpha(\kappa_{\alpha})-\tilde{\alpha}_{n0}(\kappa_{\alpha}))-M(\tilde{\alpha}_{n0}(\kappa_{\alpha})-\alpha(\kappa_{\alpha}))-\varepsilon_n<0
\end{split}
\end{equation*}
on $\{\tau_{\alpha}<\theta\}$ for  $n>\tilde{N}_0$, which
contradicts \eqref{aa}. Hence \eqref{kappaStoppingTime} holds.
Therefore, it can be seen that for
$s\in[\tau_{\alpha},\kappa_{\alpha}], n>\tilde{N}_0$
\begin{equation}\label{taukappaEstimation}
f_1^{\varepsilon_n}(s,
\tilde{\alpha}_{n0}(s))-[f(s,\tilde{\alpha}_{n0}(s))-M(\tilde{\alpha}_{n1}(s)-Y^{-}_{n\alpha}(s)-\tilde{\alpha}_{n0}(s))]\leq
0
\end{equation}
on $\{\tau_{\alpha}<\theta\}$.

Now we can show that \eqref{tau} is ture. If not, assume that for
some $N$
\begin{align*}
P(\mathcal {B})\triangleq P(\{\tau_{\alpha}<\theta\})>0.
\end{align*}
Since $Y_{n\alpha}(t)$ is a continuous semimartingale
\cite{Mohammed1984}, applying It\^o's formula yields
\begin{equation}\label{EvarphiY}
\begin{split}
&\varphi_{\epsilon}(Y_{n\alpha}((\tau_{\alpha}+t)\wedge
\kappa_{\alpha}\wedge\theta))\\
&=\varphi_{\epsilon}(Y_{n\alpha}(\tau_{\alpha}\wedge
\kappa_{\alpha}\wedge\theta))+\int_{\tau_{\alpha}\wedge
\kappa_{\alpha}\wedge\theta}^{(\tau_{\alpha}+t)\wedge
\kappa_{\alpha}\wedge\theta}\varphi_{\epsilon}'(Y_{n\alpha}(s))[f(s,\tilde{\alpha}_{n0}(s))-M(\tilde{\alpha}_{n1}(s)-\tilde{\alpha}_{n0}(s))-f_1^{\varepsilon_n}(s,
\tilde{\alpha}_{n0}(s))]ds\\
&~~~~+\int_{\tau_{\alpha}\wedge
\kappa_{\alpha}\wedge\theta}^{(\tau_{\alpha}+t)\wedge
\kappa_{\alpha}\wedge\theta}\varphi_{\epsilon}'(Y_{n\alpha}(s))(g(s,\tilde{\alpha}_{n1}(s))-g(s,\tilde{\alpha}_{n0}(s)))dB(s)\\
&~~~~+\f{1}{2}\int_{\tau_{\alpha}\wedge
\kappa_{\alpha}\wedge\theta}^{(\tau_{\alpha}+t)\wedge
\kappa_{\alpha}\wedge\theta}\varphi_{\epsilon}''(Y_{n\alpha}(s))(g(s,\tilde{\alpha}_{n1}(s))-g(s,\tilde{\alpha}_{n0}(s)))^2ds\\
&=\int_{\tau_{\alpha}\wedge
\kappa_{\alpha}\wedge\theta}^{(\tau_{\alpha}+t)\wedge
\kappa_{\alpha}\wedge\theta}\varphi_{\epsilon}'(Y_{n\alpha}(s))[f(s,\tilde{\alpha}_{n0}(s))-M(\tilde{\alpha}_{n1}(s)-\tilde{\alpha}_{n0}(s))-f_1^{\varepsilon_n}(s,
\tilde{\alpha}_{n0}(s))]ds\\
&~~~~+\int_{\tau_{\alpha}\wedge
\kappa_{\alpha}\wedge\theta}^{(\tau_{\alpha}+t)\wedge
\kappa_{\alpha}\wedge\theta}\varphi_{\epsilon}'(Y_{n\alpha}(s))(g(s,\tilde{\alpha}_{n1}(s))-g(s,\tilde{\alpha}_{n0}(s)))dB(s)\\
&~~~~+\f{1}{2}\int_{\tau_{\alpha}\wedge
\kappa_{\alpha}\wedge\theta}^{(\tau_{\alpha}+t)\wedge
\kappa_{\alpha}\wedge\theta}\varphi_{\epsilon}''(Y_{n\alpha}(s))(g(s,\tilde{\alpha}_{n1}(s))-g(s,\tilde{\alpha}_{n0}(s)))^2ds\\
&\triangleq \Delta_1+\Delta_2+\Delta_3,
\end{split}
\end{equation}
where $\varphi_{\epsilon}(y)$ is defined by \eqref{vy}. Note that
$E[\Delta_2|\mathcal {F}_{\tau_{\alpha}}]=0$. This together with the
fact that $I_\mathcal {B}$ is $\mathcal
{F}_{\tau_{\alpha}}$-measurable (see Lemma 1.2.16 in
\cite{KaratzasShreve1991}) implies that
\begin{align*}
E[\Delta_2I_\mathcal {B}]=E[E[\Delta_2I_\mathcal {B}|\mathcal
{F}_{\tau_{\alpha}}]]=E[I_\mathcal {B}E[\Delta_2|\mathcal
{F}_{\tau_{\alpha}}]]=0.
\end{align*}
Multiplying both sides of \eqref{EvarphiY} by the indicator function
$I_\mathcal {B}$, and then taking expectation, we obtain
\begin{align*}
&E[I_\mathcal {B}\varphi_{\epsilon}(Y_{n\alpha}((\tau_{\alpha}+t)\wedge \kappa_{\alpha}\wedge\theta))]\\
&=E\left[I_\mathcal {B}\int_{\tau_{\alpha}\wedge
\kappa_{\alpha}\wedge\theta}^{(\tau_{\alpha}+t)\wedge
\kappa_{\alpha}\wedge\theta}\varphi_{\epsilon}'(Y_{n\alpha}(s))[f(s,\tilde{\alpha}_{n0}(s))-M(\tilde{\alpha}_{n1}(s)-\tilde{\alpha}_{n0}(s))-f_1^{\varepsilon_n}(s,
\tilde{\alpha}_{n0}(s))]ds\right]\\
&~~~~+\f{1}{2}E\left[I_\mathcal {B}\int_{\tau_{\alpha}\wedge
\kappa_{\alpha}\wedge\theta}^{(\tau_{\alpha}+t)\wedge
\kappa_{\alpha}\wedge\theta}\varphi_{\epsilon}''(Y_{n\alpha}(s))(g(s,\tilde{\alpha}_{n1}(s))-g(s,\tilde{\alpha}_{n0}(s)))^2ds\right].
\end{align*}
Letting $\epsilon\rightarrow 0$ yields
\begin{align*}
&E[I_\mathcal {B}(Y^{-}_{n\alpha}((\tau_{\alpha}+t)\wedge \kappa_{\alpha}\wedge\theta))^2]\\
&=E\left[I_\mathcal {B}\int_{\tau_{\alpha}\wedge
\kappa_{\alpha}\wedge\theta}^{(\tau_{\alpha}+t)\wedge
\kappa_{\alpha}\wedge\theta}2(Y^{-}_{n\alpha}(s))[f(s,\tilde{\alpha}_{n0}(s))-M(\tilde{\alpha}_{n1}(s)-\tilde{\alpha}_{n0}(s))-f_1^{\varepsilon_n}(s,
\tilde{\alpha}_{n0}(s))]ds\right]\\
&~~~~+\f{1}{2}E\left[I_\mathcal {B}\int_{\tau_{\alpha}\wedge
\kappa_{\alpha}\wedge\theta}^{(\tau_{\alpha}+t)\wedge
\kappa_{\alpha}\wedge\theta}I_{\{Y_{n\alpha}(s)\leq
0\}}(g(s,\tilde{\alpha}_{n1}(s))-g(s,\tilde{\alpha}_{n0}(s)))^2ds\right]\\
&\leq -2ME\left[I_\mathcal {B}\int_{\tau_{\alpha}\wedge
\kappa_{\alpha}\wedge\theta}^{(\tau_{\alpha}+t)\wedge
\kappa_{\alpha}\wedge\theta}(Y^{-}_{n\alpha}(s))^2ds\right]+\f{L}{2}E\left[I_\mathcal
{B}\int_{\tau_{\alpha}\wedge
\kappa_{\alpha}\wedge\theta}^{(\tau_{\alpha}+t)\wedge
\kappa_{\alpha}\wedge\theta}I_{\{Y_{n\alpha}(s)\leq
0\}}(\tilde{\alpha}_{n1}(s)-\tilde{\alpha}_{n0}(s))^2ds\right]\\
&~~~~+E\left[I_\mathcal {B}\int_{\tau_{\alpha}\wedge
\kappa_{\alpha}\wedge\theta}^{(\tau_{\alpha}+t)\wedge
\kappa_{\alpha}\wedge\theta}2(Y^{-}_{n\alpha}(s))[f(s,\tilde{\alpha}_{n0}(s))-M(\tilde{\alpha}_{n1}(s)-Y^{-}_{n\alpha}(s)-\tilde{\alpha}_{n0}(s))-f_1^{\varepsilon_n}(s,
\tilde{\alpha}_{n0}(s))]ds\right]\\
&\leq -2ME\left[I_\mathcal {B}\int_{\tau_{\alpha}\wedge
\kappa_{\alpha}\wedge\theta}^{(\tau_{\alpha}+t)\wedge
\kappa_{\alpha}\wedge\theta}(Y^{-}_{n\alpha}(s))^2ds\right]+\f{L}{2}E\left[I_\mathcal
{B}\int_{\tau_{\alpha}\wedge
\kappa_{\alpha}\wedge\theta}^{(\tau_{\alpha}+t)\wedge
\kappa_{\alpha}\wedge\theta}I_{\{Y_{\alpha}(s)\leq
0\}}(\tilde{\alpha}_{n1}(s)-\tilde{\alpha}_{n0}(s))^2ds\right]\\
&=\left(\f{L}{2}-2M\right)E\left[I_\mathcal
{B}\int_{\tau_{\alpha}\wedge
\kappa_{\alpha}\wedge\theta}^{(\tau_{\alpha}+t)\wedge
\kappa_{\alpha}\wedge\theta}(Y^{-}_{n\alpha}(s))^2ds\right]\\
&=\left(\f{L}{2}-2M\right)E\left[I_\mathcal
{B}\int_{0}^{t}(Y^{-}_{n\alpha}((\tau_{\alpha}+s)\wedge
\kappa_{\alpha}\wedge\theta))^2ds\right]\leq 0,
\end{align*}
where the second inequality holds by $Y^{-}_{n\alpha}(s)\leq 0$ and
$f(s,\tilde{\alpha}_{n0}(s))-M(\tilde{\alpha}_{n1}(s)-Y^{-}_{n\alpha}(s)-\tilde{\alpha}_{n0}(s))-f_1^{\varepsilon_n}(s,
\tilde{\alpha}_{n0}(s))\geq 0$ for $s\in [\tau_{\alpha},
\kappa_{\alpha}], n>\tilde{N}_0$,  the last  inequality is by
$\f{L}{2}-2M<0$. Therefore
\begin{align*}
E[I_\mathcal {B}(Y^{-}_{n\alpha}((\tau_{\alpha}+t)\wedge
\kappa_{\alpha}\wedge\theta))^2]=0,
\end{align*}
which tells us that
\begin{align*}
\tilde{\alpha}_{n1}((\tau_{\alpha}+t)\wedge
\kappa_{\alpha}\wedge\theta))\geq\tilde{\alpha}_{n0}((\tau_{\alpha}+t)\wedge
\kappa_{\alpha}\wedge\theta))~~a.s.
\end{align*}
for every $t\geq 0, n>\tilde{N}_0$  on $\mathcal {B}$. It follows
from the continuity of $\tilde{\alpha}_{n0}(t),
\tilde{\alpha}_{n1}(t)$ that
\begin{align*}
\tilde{\alpha}_{n1}((\tau_{\alpha}+t)\wedge
\kappa_{\alpha}\wedge\theta))\geq\tilde{\alpha}_{n0}((\tau_{\alpha}+t)\wedge
\kappa_{\alpha}\wedge\theta)),t\geq 0,n>\tilde{N}_0 ~~a.s.
\end{align*}
on $\mathcal {B}$. This contradicts \eqref{stoppingTime}, which
shows that \eqref{tau} holds. Hence we have
$P(\{\tau_{\alpha}=\theta\})=1$. Therefore
\begin{align*}
P(\{\tilde{\alpha}_{n0}(t)\leq\tilde{\alpha}_{n1}(t), t\geq 0,
n>\tilde{N}_0\})=1,
\end{align*}
i.e.
\begin{align*}
\tilde{\alpha}_{n0}\leq A(\tilde{\alpha}_{n0}),
n>\tilde{N}_0~~\mbox{a.s.}
\end{align*}

  Similarly, we can
prove $\tilde{\beta}\geq A(\tilde{\beta})$~~a.s.

To prove (ii), suppose that $u_1=A(\eta_1)$ and $u_2=A(\eta_2)$. In
order to get $u_2\geq u_1$ a.s., we consider the auxiliary system:
\begin{equation}\label{un1auxiliarysystem}
du_{n1}(t)=[f(t,\eta_1)-\varepsilon_n-M(u_{n1}-\eta_1)]dt+g(t,u_{n1})dB(t)
\end{equation}
with initial value $u_{n1}(0)=u_1(0)$, where $\varepsilon_n$ is
defined as previous.

Setting $Y_{nu}(t)=u_2(t)-u_{n1}(t)$, we get
\begin{align*}
dY_{nu}(t)=[(f(t,\eta_2)-M(u_2-\eta_2))-(f(t,\eta_1)-\varepsilon_n-M(u_{n1}-\eta_1))]dt+(g(t,u_2)-g(t,u_{n1}))dB(t).
\end{align*}
Define the stopping time
\begin{equation}\label{ustoppingTime}
\begin{split}
\tau_u&\triangleq\inf\{t>0: u_{n1}(t)>u_2(t)\}.
\end{split}
\end{equation}
It is clear that $\tau_{u}\leq\theta$. In order to verify the
conclusion, we have to show that
\begin{equation}\label{taueta}
P(\{\tau_{u}<\theta\})=0.
\end{equation}
Let
\begin{align*}
\kappa_{u}\triangleq\inf\{t>\tau_{u}: Z_{nu}(t, \omega)\triangleq
[&f(t,
\eta_1(t))-\varepsilon_n-M(u_{n1}(t)-\eta_1(t))]\\
&-[f(t,\eta_2(t))-M(u_2(t)-Y^{-}_{nu}(t)-\eta_2(t))]>0\},
\end{align*}
where $Z_{nu}(t,\omega)$ is $\mathcal{F}_t$-adapted and  it is
continuous for a fixed $\omega$. Hence $\kappa_{u}$ is an
$\mathcal{F}_t$ stopping time.

We claim that
\begin{equation}\label{ukappaStoppingTime}
\kappa_{u}>\tau_{u}~~\mbox{on}~~\{\tau_{u}<\theta\}.
\end{equation}
By the definition of $\kappa_{u}$, we know that $\kappa_{u}\geq
\tau_{u}$ and
\begin{equation}\label{uaa}
[f(\kappa_{u},
\eta_1(\kappa_{u}))-\varepsilon_n-M(u_{n1}(\kappa_{u})-\eta_1(\kappa_{u}))]-[f(\kappa_{u},\eta_2(\kappa_{u}))-M(u_2(\kappa_{u})-Y^{-}_{nu}(\kappa_{u})-\eta_2(\kappa_{u}))]\geq0.
\end{equation}
Then in order to prove \eqref{ukappaStoppingTime}, we only need to
prove that the case $\kappa_{u}=\tau_{u}$ is impossible. If not,
suppose $\kappa_{u}=\tau_{u}$ on $\{\tau_{u}<\theta\}$. Since
$u_{n1}(\tau_{u})=u_2(\tau_{u})$, we have
$Y^{-}_{nu}(\tau_{u})=Y^{-}_{nu}(\kappa_{u})=0$. And then
\begin{align*}
&[f(\kappa_{u},
\eta_1(\kappa_{u}))-\varepsilon_n-M(u_{n1}(\kappa_{u})-\eta_1(\kappa_{u}))]-[f(\kappa_{u},\eta_2(\kappa_{u}))-M(u_2(\kappa_{u})-Y^{-}_{nu}(\kappa_{u})-\eta_2(\kappa_{u}))]\\
&=f(\kappa_{u},
\eta_1(\kappa_{u}))-f(\kappa_{u},\eta_2(\kappa_{u}))+M(\eta_1(\kappa_{u})-\eta_2(\kappa_{u}))-\varepsilon_n\\
&\leq-M(\eta_1(\kappa_{u})-\eta_2(\kappa_{u}))+M(\eta_1(\kappa_{u})-\eta_2(\kappa_{u}))-\varepsilon_n=-\varepsilon_n<0
\end{align*}
on $\{\tau_{u}<\theta\}$, which contradicts \eqref{uaa}. Hence
\eqref{ukappaStoppingTime} holds. Therefore, it can be seen that for
$s\in[\tau_{u},\kappa_{u}]$
\begin{equation}\label{utaukappaEstimation}
[f(s,
\eta_1(s))-\varepsilon_n-M(u_{n1}(s)-\eta_1(s))]-[f(s,\eta_2(s))-M(u_2(s)-Y^{-}_{nu}(s)-\eta_2(s))]\leq
0
\end{equation}
on $\{\tau_{u}<\theta\}$.

Now we prove \eqref{taueta}. If not,
\begin{align*}
P(\mathcal {C})\triangleq P(\{\tau_{u}<\theta\})>0.
\end{align*}
By It\^o's formula, we get
\begin{equation}\label{uEvarphiY}
\begin{split}
&\varphi_{\epsilon}(Y_{nu}((\tau_{u}+t)\wedge
\kappa_{u}\wedge\theta))=\varphi_{\epsilon}(Y_{nu}(\tau_{u}\wedge
\kappa_{u}\wedge\theta))\\
&~~~~+\int_{\tau_{u}\wedge
\kappa_{u}\wedge\theta}^{(\tau_{u}+t)\wedge
\kappa_{u}\wedge\theta}\varphi_{\epsilon}'(Y_{nu}(s))[f(s,\eta_2(s))-M(u_2(s)-\eta_2(s))-f(s,
\eta_1(s))+\varepsilon_n+M(u_{n1}(s)-\eta_1(s))]ds\\
&~~~~+\int_{\tau_{u}\wedge
\kappa_{u}\wedge\theta}^{(\tau_{u}+t)\wedge
\kappa_{u}\wedge\theta}\varphi_{\epsilon}'(Y_{nu}(s))(g(s,u_2(s))-g(s,u_{n1}(s)))dB(s)\\
&~~~~+\f{1}{2}\int_{\tau_{u}\wedge
\kappa_{u}\wedge\theta}^{(\tau_{u}+t)\wedge
\kappa_{u}\wedge\theta}\varphi_{\epsilon}''(Y_{nu}(s))(g(s,u_2(s))-g(s,u_{n1}(s)))^2ds\\
&=\int_{\tau_{u}\wedge \kappa_{u}\wedge\theta}^{(\tau_{u}+t)\wedge
\kappa_{u}\wedge\theta}\varphi_{\epsilon}'(Y_{nu}(s))[f(s,\eta_2(s))-M(u_2(s)-\eta_2(s))-f(s,
\eta_1(s))+\varepsilon_n+M(u_{n1}(s)-\eta_1(s))]ds\\
&~~~~+\int_{\tau_{u}\wedge
\kappa_{u}\wedge\theta}^{(\tau_{u}+t)\wedge
\kappa_{u}\wedge\theta}\varphi_{\epsilon}'(Y_{nu}(s))(g(s,u_2(s))-g(s,u_{n1}(s)))dB(s)\\
&~~~~+\f{1}{2}\int_{\tau_{u}\wedge
\kappa_{u}\wedge\theta}^{(\tau_{u}+t)\wedge
\kappa_{u}\wedge\theta}\varphi_{\epsilon}''(Y_{nu}(s))(g(s,u_2(s))-g(s,u_{n1}(s)))^2ds\\
&\triangleq \tilde{\Delta}_1+\tilde{\Delta}_2+\tilde{\Delta}_3.
\end{split}
\end{equation}
It is easy to verify that
\begin{align*}
E[\tilde{\Delta}_2I_\mathcal {C}]=E[E[\tilde{\Delta}_2I_\mathcal
{C}|\mathcal {F}_{\tau_{u}}]]=E[I_\mathcal
{C}E[\tilde{\Delta}_2|\mathcal {F}_{\tau_{u}}]]=0.
\end{align*}
Multiplied by the indicator function $I_\mathcal {C}$ to the both
sides of \eqref{uEvarphiY}, and then taking expectation, we obtain
\begin{align*}
&E[I_\mathcal {C}\varphi_{\epsilon}(Y_{nu}((\tau_{u}+t)\wedge \kappa_{u}\wedge\theta))]\\
&=E\left[I_\mathcal {C}\int_{\tau_{u}\wedge
\kappa_{u}\wedge\theta}^{(\tau_{u}+t)\wedge
\kappa_{u}\wedge\theta}\varphi_{\epsilon}'(Y_{nu}(s))[f(s,\eta_2(s))-M(u_2(s)-\eta_2(s))-f(s,
\eta_1(s))+\varepsilon_n+M(u_{n1}(s)-\eta_1(s))]ds\right]\\
&~~~~+\f{1}{2}E\left[I_\mathcal {C}\int_{\tau_{u}\wedge
\kappa_{u}\wedge\theta}^{(\tau_{u}+t)\wedge
\kappa_{u}\wedge\theta}\varphi_{\epsilon}''(Y_{nu}(s))(g(s,u_2(s))-g(s,u_{n1}(s)))^2ds\right].
\end{align*}
Letting $\epsilon\rightarrow 0$ yields
\begin{align*}
&E[I_\mathcal {C}(Y^{-}_{nu}((\tau_{u}+t)\wedge \kappa_{u}\wedge\theta))^2]\\
&=E\left[I_\mathcal {C}\int_{\tau_{u}\wedge
\kappa_{u}\wedge\theta}^{(\tau_{u}+t)\wedge
\kappa_{u}\wedge\theta}2(Y^{-}_{nu}(s))[f(s,\eta_2(s))-M(u_2(s)-\eta_2(s))-f(s,
\eta_1(s))+\varepsilon_n+M(u_{n1}(s)-\eta_1(s))]ds\right]\\
&~~~~+\f{1}{2}E\left[I_\mathcal {C}\int_{\tau_{u}\wedge
\kappa_{u}\wedge\theta}^{(\tau_{u}+t)\wedge
\kappa_{u}\wedge\theta}I_{\{Y_{nu}(s)\leq
0\}}(g(s,u_2(s))-g(s,u_{n1}(s)))^2ds\right]\\
&\leq -2ME\left[I_\mathcal {C}\int_{\tau_{u}\wedge
\kappa_{u}\wedge\theta}^{(\tau_{u}+t)\wedge
\kappa_{u}\wedge\theta}(Y^{-}_{nu}(s))^2ds\right]+\f{L}{2}E\left[I_\mathcal
{C}\int_{\tau_{u}\wedge \kappa_{u}\wedge\theta}^{(\tau_{u}+t)\wedge
\kappa_{u}\wedge\theta}I_{\{Y_{nu}(s)\leq
0\}}(u_2(s)-u_{n1}(s))^2ds\right]\\
&~~~~+E\left[I_\mathcal {C}\int_{\tau_{u}\wedge
\kappa_{u}\wedge\theta}^{(\tau_{u}+t)\wedge
\kappa_{u}\wedge\theta}2(Y^{-}_{nu}(s))[f(s,\eta_2(s))-M(u_2(s)-Y^{-}_u(s)-\eta_2(s))\right.\\
&~~~~~~~~~~~~~~~~~~~~~~~~~~~~~~~~~~~~~~~~~~~~~~~~~~~~~\left.-f(s,
\eta_1(s))+\varepsilon_n+M(u_{n1}(s)-\eta_1(s))]ds\right]\\
&\leq -2ME\left[I_\mathcal {C}\int_{\tau_{u}\wedge
\kappa_{u}\wedge\theta}^{(\tau_{u}+t)\wedge
\kappa_{u}\wedge\theta}(Y^{-}_{nu}(s))^2ds\right]+\f{L}{2}E\left[I_\mathcal
{C}\int_{\tau_{u}\wedge \kappa_{u}\wedge\theta}^{(\tau_{u}+t)\wedge
\kappa_{u}\wedge\theta}I_{\{Y_{nu}(s)\leq
0\}}(u_2(s)-u_{n1}(s))^2ds\right]\\
&=\left(\f{L}{2}-2M\right)E\left[I_\mathcal {C}\int_{\tau_{u}\wedge
\kappa_{u}\wedge\theta}^{(\tau_{u}+t)\wedge
\kappa_{u}\wedge\theta}(Y^{-}_{nu}(s))^2ds\right]\\
&=\left(\f{L}{2}-2M\right)E\left[I_\mathcal
{C}\int_{0}^{t}(Y^{-}_{nu}((\tau_{u}+s)\wedge
\kappa_{u}\wedge\theta))^2ds\right]\leq 0,
\end{align*}
where the last but one inequality holds by $Y^{-}_{nu}(s)\leq 0$ and
$f(s,\eta_2(s))-M(u_2(s)-Y^{-}_{nu}(s)-\eta_2(s))-f(s,
\eta_1(s))+\varepsilon_n+M(u_{n1}(s)-\eta_1(s))\geq 0$ for $s\in
[\tau_{u}, \kappa_{u}]$, the last inequality is by $\f{L}{2}-2M<0$.
Therefore
\begin{align*}
E[I_\mathcal {C}(Y^{-}_{u}((\tau_{u}+t)\wedge
\kappa_{u}\wedge\theta))^2]=0,
\end{align*}
which tells us that
\begin{align*}
u_2((\tau_{u}+t)\wedge \kappa_{u}\wedge\theta))\geq
u_{n1}((\tau_{u}+t)\wedge \kappa_{u}\wedge\theta))~~a.s.
\end{align*}
for every $t\geq 0$ on $\mathcal {C}$. It follows from the
continuity of $u_{n1}(t), u_2(t)$ that
\begin{align*}
u_2((\tau_{u}+t)\wedge \kappa_{u}\wedge\theta))\geq
u_{n1}((\tau_{u}+t)\wedge \kappa_{u}\wedge\theta)),t\geq 0~~a.s.
\end{align*}
on $\mathcal {C}$. This contradicts \eqref{ustoppingTime}, which
shows $P(\mathcal {C})= P(\{\tau_{u}<\theta\})=0$. Hence we have
$P(\{\tau_{u}=\theta\})=1$. Therefore
\begin{align*}
P(\{u_{n1}(t)\leq u_2(t), t\geq 0\})=1.
\end{align*}
On the other hand, from \eqref{un1auxiliarysystem} and stochastic
comparison theorem, we have
\begin{align*}
u_{11}(t)\leq u_{21}(t)\leq \cdots \leq u_{n1}(t)\leq
u_1(t)~~\mbox{a.s.}~~\mbox{for all}~~t\geq 0.
\end{align*}
Define
\begin{align*}
\lim\limits_{n\rightarrow\infty}u_{n1}(t)\triangleq\tilde{u}_1(t)~~\mbox{a.s.}~~\mbox{for
all}~~t\geq 0.
\end{align*}
 As in the
previous proof, we can show that $u_{n1}(t)$  uniformly converges to
$\tilde{u}_1(t)$ on $t\in[0,T]$ a.s. as $n\rightarrow\infty$. And
thus $\tilde{u}_1(t)$ satisfies\begin{align*}
d\tilde{u}_{1}(t)=[f(t,\eta_1)-M(\tilde{u}_{1}-\eta_1)]dt+g(t,\tilde{u}_{1})dB(t)~~\mbox{a.s.}~~\mbox{for
every}~~t\in [0,\theta]
\end{align*}
with initial value $\tilde{u}_{1}(0)=u_1(0)$. Therefore, by the
uniqueness of strong solutions  we get that $\tilde{u}_1(t)$ is a
modification of the solution $u_1(t)$.

Consequently
\begin{align*}
P(\{u_{1}(t)\leq u_2(t), t\geq 0\})=1,
\end{align*}
i.e.
\begin{align*}
A(\eta_1)\leq A(\eta_2)~~\mbox{a.s.}
\end{align*}

{\bf Step 3:} It is now easy to define the sequences
$\{\tilde{\alpha}_n(t)\}, \{\tilde{\beta}_n(t)\}$ with
$\tilde{\alpha}=\tilde{\alpha}_0, \tilde{\beta}=\tilde{\beta}_0$
such that $\tilde{\alpha}_n=A(\tilde{\alpha}_{n-1}),
\tilde{\beta}_n=A(\tilde{\beta}_{n-1})$, we can conclude
\begin{align*}
\tilde{\alpha}=\tilde{\alpha}_0\leq \tilde{\alpha}_1\leq \cdots\leq
\tilde{\alpha}_n \leq \tilde{\beta}_n\leq \cdots \leq
\tilde{\beta}_1 \leq \tilde{\beta}_0=\tilde{\beta}~~\mbox{a.s.}
\end{align*}

Using the monotone convergence theorem yields
$\lim\limits_{n\rightarrow\infty}\tilde{\alpha}_n(t)=a(t),
\lim\limits_{n\rightarrow\infty}\tilde{\beta}_n(t)=b(t)$ uniformly
a.s. This implies that
\begin{align*}
da(t)&=f(t,a(t))dt+g(t,a(t))dB(t),\\
db(t)&=f(t,b(t))dt+g(t,b(t))dB(t).
\end{align*}
Therefore, there is a  solution $x_0^*(t)\in[a(t),b(t)]$ of system
\eqref{sde} for $t\in[0,\theta]$ and
$x_0^*(0)\overset{d}{=}x_0^*(\theta)$.

Now we prove the second result of this theorem. For $t\in[\theta,
2\theta]$, let $s=t-\theta, s\in[0,\theta]$. Consider
\begin{equation}\label{sdeG} dx_1(s)=
f(s,x_1(s))ds+g(s,x_1(s))d\widetilde{B}(s), s\in[0,\theta]
\end{equation}
with initial value $x_1(0)=x_0^*(\theta)$, where
$\widetilde{B}(s)=B(s+\theta)-B(\theta)$ has the same distribution
as $B(s)$. From the uniqueness of the weak solution, we can know
$x_1(s)=x_0^*(s)\in[a(t),b(t)]$ is the solution of system
\eqref{sdeG} for $s\in[0,\theta]$ and
$x_1(0)=x_0^*(\theta)\overset{d}{=}x_1(\theta)\overset{d}{=}x_0^*(0)$.
 Since $f(t)$
and $g(t)$ are $\theta$-periodic, $x_1^*(t)\triangleq
x_0^*(t-\theta), t\in[\theta, 2\theta]$ is still a solution of
\begin{equation}\label{sdeGG}
dx(t)= f(t,x(t))dt+g(t,x(t))d{B}(t),
\end{equation}
and $x_0^*(\theta)=x_1^*(\theta)\overset{d}{=}x_1^*(2\theta)$.
Reaping this process, we can obtain a sequence $\{x_k^*(t)\},
t\in[k\theta,(k+1)\theta],k\in \mathbb{Z}$, and
$x_{k-1}^*(k\theta)=x_k^*(k\theta)\overset{d}{=}x_k^*((k+1)\theta)$.
Obviously, they are the same in distribution. Therefore, by the
uniqueness, $x^*(t)\triangleq x_k^*(t),
t\in[k\theta,(k+1)\theta],k\in \mathbb{Z}$ is the solution of system
\eqref{sde}, and it is $\theta-$periodic in distribution.

\end{proof}

\section{Multi-dimensional SDEs}
Consider the following $d$-dimensional SDE
\begin{equation}\label{dsde}
dX(t)=\mathbbm{f}(t,X(t))dt+\mathbbm{g}(t,X(t))dB(t),
\end{equation}
where $X(t)=(X_1(t),X_2(t),\cdots,X_d(t))^\top,
B(t)=(B_1(t),B_2(t),\cdots,B_r(t))^\top$ is an $r$-dimensional
${\cal{F}}_t$-adapted Gaussian process with values in
$\mathbb{R}^r$, and $B_1(t),B_2(t),\cdots,B_r(t)$ are mutually
independent. Assume the drift term
 and the
diffusion term
$\mathbbm{f}:\mathbb{R}_+\times\mathbb{R}^d\rightarrow\mathbb{R}^d,
\mathbbm{g}:\mathbb{R}_+\times\mathbb{R}^d\rightarrow\mathbb{R}^{d\times
r}$ are continuous and satisfy
\begin{align*}
{\bf{H^*}}:~~
&\mathbbm{f}(t+\theta,x)=\mathbbm{f}(t,x), \mathbbm{g}(t+\theta,x)=\mathbbm{g}(t,x),\\
&\mathbbm{f}_i(t,x)-\mathbbm{f}_i(t,y)\geq-M (x_i-y_i), i=1,2,\cdots,d, t\in[0,\theta]~\mbox{for any}~x,y~\mbox{such that}~\alpha\leq y\leq x\leq\beta,\\
&\|\mathbbm{f}(t,x)-\mathbbm{f}(t,y)\|\leq M\|x-y\|,~\mbox{for
any}~x,y\in[\alpha, \beta],\\
&\sum\limits_{j=1}^r|\mathbbm{g}_{ij}(t,x)-\mathbbm{g}_{ij}(t,y)|^2\leq
L|x_i-y_i|^2, i=1,2,\cdots,d~\mbox{for any}~x,y\in[\alpha, \beta],
\end{align*}
where  $\theta, M$ and $L$ are positive constants,
$\alpha(t),\beta(t)$ are defined by Definition \ref{uls} with
$h(t,x)=\mathbbm{f}(t,x)$.

Define two SDEs by
\begin{equation}\label{dalpha}
\begin{split}
d\bar{\alpha}_i(t)&=[\alpha'_i(t)-M(\bar{\alpha}_i(t)-\alpha_i(t))]dt+\sum\limits_{j=1}^{r}\mathbbm{g}_{ij}(t,\bar{\alpha}(t))dB_j(t)\\
&:=\mathbbm{f}_{1i}(t,\bar{\alpha}(t))dt+\sum\limits_{j=1}^{r}\mathbbm{g}_{ij}(t,\bar{\alpha}(t))dB_j(t),~~i=1,2,\cdots,d,
\end{split}
\end{equation}
\begin{equation}\label{dbeta}
\begin{split}
 d\bar{\beta}_i(t)&=[\beta'_i(t)+M(\bar{\beta}_i(t)-\beta_i(t))]dt+\sum\limits_{j=1}^{r}\mathbbm{g}_{ij}(t,\bar{\beta}(t))dB_j(t)\\
 &:=\mathbbm{f}_{2i}(t,\bar{\beta}(t))dt+\sum\limits_{j=1}^{r}\mathbbm{g}_{ij}(t,\bar{\beta}(t))dB_j(t),~~i=1,2,\cdots,d
 \end{split}
\end{equation}
with initial values $\bar{\alpha}(0)=\alpha(0)+\zeta$ and
$\bar{\beta}(0)=\beta(0)-\zeta$, respectively and $t\in[0,\theta]$,
where
$\bar{\alpha}(t)=(\bar{\alpha}_1(t),\bar{\alpha}_2(t),\cdots,\bar{\alpha}_d(t))^\top,
\bar{\beta}(t)=(\bar{\beta}_1(t),\bar{\beta}_2(t),\cdots,\bar{\beta}_d(t))^\top$
and $\alpha(t),\beta(t)$ are defined by Definition \ref{uls} with
$h(t,x)=\mathbbm{f}(t,x)$, $\zeta\in\mathbb{R}^d_+$ and $\|\zeta\|$
is sufficiently small (here $\|\cdot\|$ is the general Euclidean
norm).

 As the same argument as in Section 3, we can obtain the following results.

\begin{lemma}\label{abb}
 Let $\bar{\alpha}(t), \bar{\beta}(t)~~(t\in[0,\theta])$ be
solutions of equations \eqref{dalpha} and \eqref{dbeta} with initial
values $\bar{\alpha}(0)$ and $\bar{\beta}(0)$, respectively. Assume
the second condition in hypothesis $H^*$ is satisfied. Then
\begin{align*}
\alpha(t)\leq\bar{\alpha}(t)\leq\bar{\beta}(t)\leq\beta(t)~~a.s.
\end{align*}
\end{lemma}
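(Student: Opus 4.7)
My approach is to adapt the proof of Lemma \ref{abt} coordinate by coordinate. It suffices to establish the three pointwise inequalities $\alpha_i(t)\leq\bar\alpha_i(t)$, $\bar\beta_i(t)\leq\beta_i(t)$, and $\bar\alpha_i(t)\leq\bar\beta_i(t)$, a.s.\ for every $i=1,\ldots,d$ and $t\in[0,\theta]$. For the first (and, by symmetry, the second), fix an index $i$ and notice that the drift $\mathbbm{f}_{1i}(t,\bar\alpha)=\alpha_i'(t)-M(\bar\alpha_i(t)-\alpha_i(t))$ defining the $i$-th coordinate of \eqref{dalpha} depends on $\bar\alpha$ only through its own $i$-th entry $\bar\alpha_i$. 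Hence $\bar\alpha_i-\alpha_i$ obeys a scalar equation structurally identical to the one in Lemma \ref{abt}. I would therefore define $\tau=\inf\{t\geq 0:\alpha_i(t)\geq\bar\alpha_i(t)\}$, approximate it by $\tau_m=\inf\{t\in[0,\tau):\bar\alpha_i(t)-\alpha_i(t)\leq 1/m\}$, multiply the integrated equation by $e^{Mt}$ so that the drift cancels, and then apply the indicator $I_{\{\tau_m\leq\gamma\}}$ before taking expectations. The same contradiction $0\geq\zeta_i e^{-M\gamma}>0$ as $m\to\infty$ forces $\tau>\theta$ a.s., exactly as in Lemma \ref{abt}. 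The estimate for $\bar\beta_i-\beta_i$ is symmetric, with the sign of $M$ reversed.

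For the middle inequality $\bar\alpha(t)\leq\bar\beta(t)$, I would invoke a multi-dimensional comparison principle for SDEs. The crucial inputs are both packaged into $\mathbf{H^*}$: the quasi-monotone condition $\mathbbm{f}_i(t,x)-\mathbbm{f}_i(t,y)\geq -M(x_i-y_i)$ for $\alpha\leq y\leq x\leq\beta$, which permits comparing coordinate $i$ without being damaged by coupling through the other coordinates, and the coordinate-aligned Lipschitz bound $\sum_{j=1}^r|\mathbbm{g}_{ij}(t,x)-\mathbbm{g}_{ij}(t,y)|^2\leq L|x_i-y_i|^2$, which is exactly what a coordinate-wise Itô expansion of $\varphi_\epsilon\bigl((\bar\alpha_i-\bar\beta_i)^+\bigr)$ requires in order to close under the $\tfrac{L}{2}-2M<0$ style cancellation used in Step 2 of Theorem \ref{1th}.

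The main obstacle is this third step. In the scalar case the comparison is immediate, but here $\mathbbm{g}_{ij}(t,\bar\alpha)$ and $\mathbbm{g}_{ij}(t,\bar\beta)$ are functions of the full vectors $\bar\alpha,\bar\beta$, so a naive Itô calculation on a single coordinate would produce cross terms involving $|\bar\alpha_j-\bar\beta_j|$ for $j\neq i$. The coordinate-aligned bound on $\mathbbm{g}$ in $\mathbf{H^*}$ is precisely the structural ingredient that neutralizes this difficulty: it reduces each componentwise comparison to a scalar Gronwall-type estimate, after which the stopping-time and $\varphi_\epsilon$ argument of Lemma \ref{abt} and Theorem \ref{1th} transfer verbatim.
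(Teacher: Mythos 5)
Your proposal is correct and matches the paper's intent, which is simply to repeat the scalar argument of Lemma~\ref{abt} coordinate by coordinate; the paper offers no separate proof beyond the remark that ``the same argument as in Section~3'' applies. One clarification on the middle inequality: the auxiliary drifts $\mathbbm{f}_{1i},\mathbbm{f}_{2i}$ in \eqref{dalpha}--\eqref{dbeta} already depend only on the $i$-th coordinate, so the quasi-monotone condition on $\mathbbm{f}$ is not needed to suppress drift coupling --- its actual role, combined with the strict lower/upper solution inequalities $\alpha_i'<\mathbbm{f}_i(t,\alpha)$ and $\beta_i'>\mathbbm{f}_i(t,\beta)$, is to supply the drift ordering $\mathbbm{f}_{1i}(t,z)\leq\mathbbm{f}_{2i}(t,z)$ at a putative contact point $z\in[\alpha_i(t),\beta_i(t)]$, which is the hypothesis the comparison theorem requires; and for that ordering to come out one must read the drift of \eqref{dbeta} with $-M$, matching \eqref{beta}, since with $+M$ as printed the ordering fails.
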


We now state the second main result as follows:

\begin{theorem}\label{dth}
Let hypothesis $H^*$ hold, and assume there are strict lower and
upper solutions $\alpha$ and $\beta$ for $h=\mathbbm{f}$ with
$\alpha<\beta$. Then there exit monotone sequences
$\{\bar{\alpha}_n(t)\}, \{\bar{\beta}_n(t)\}$ with
$\alpha_0=\bar{\alpha}, \beta_0=\bar{\beta}$ such that
$\lim\limits_{n\rightarrow\infty}\bar{\alpha}_n(t)=\bar{a}(t),
\lim\limits_{n\rightarrow\infty}\bar{\beta}_n(t)=\bar{b}(t)$ and
\begin{align*}
\bar{\alpha}=\bar{\alpha}_0\leq\bar{\alpha}_1\leq\cdots\leq\bar{\alpha}_n\leq
a\leq u\leq
b\leq\bar{\beta}_n\leq\cdots\leq\bar{\beta}_1\leq\bar{\beta}_0=\bar{\beta}
\end{align*}
on $[0,\theta]$, where $u$ is a solution of system \eqref{dsde} such
that $\bar{\alpha}(t)\leq u(t)\leq\bar{\beta}(t)$ on $[0,\theta]$
a.s., and $u(0)\overset{d}{=}u(\theta)$. Therefore, there is a $\theta-$periodic solution in distribution of system
\eqref{dsde}.
\end{theorem}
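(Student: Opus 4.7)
The plan is to follow the three-step scheme of Theorem \ref{1th} componentwise, substituting the quasi-monotone condition in $H^\ast$ for the two-sided Lipschitz bound that drove the scalar argument. Lemma \ref{abb} takes the place of Lemma \ref{abt}, ensuring the auxiliary trajectories stay in the order interval $[\bar\alpha,\bar\beta]$.

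First, for each fixed $\eta\in[\bar\alpha,\bar\beta]$ I would study the auxiliary componentwise system
\begin{equation*}
du_i=\bigl[\mathbbm{f}_i(t,\eta)-M(u_i-\eta_i)\bigr]dt+\sum_{j=1}^r\mathbbm{g}_{ij}(t,u)\,dB_j(t),\quad i=1,\dots,d,
\end{equation*}
write $u$ by variation of constants, and use an $L^2$-Gronwall estimate together with a Banach contraction argument for the period map $\widetilde Tu(0)=u(\theta)$ on $[\bar\alpha(0),\bar\beta(0)]$. The componentwise diffusion bound $\sum_j|\mathbbm{g}_{ij}(t,x)-\mathbbm{g}_{ij}(t,y)|^2\le L|x_i-y_i|^2$ is exactly what is needed here; the contraction threshold $M-L>\ln 2/(2\theta)$ (identical to the scalar case, implicit in the hypothesis) yields a unique $u^\ast$ with $u^\ast(0)\overset{d}{=}u^\ast(\theta)$ and $\bar\alpha\le u^\ast\le\bar\beta$.

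Next, set $A(\eta)=u$ and establish the two order properties: (i) $\bar\alpha\le A(\bar\alpha)$ and $\bar\beta\ge A(\bar\beta)$ a.s., and (ii) $\eta_1\le\eta_2$ componentwise implies $A\eta_1\le A\eta_2$ a.s. For (i), perturb $\mathbbm{f}_{1i}$ to $\mathbbm{f}_{1i}-\varepsilon_n$, identify the uniform almost-sure limit $\bar\alpha^0$ of the perturbed solutions via the same Borel--Cantelli/Gronwall scheme as in the scalar proof (again the componentwise diffusion bound is the crucial structural input), and then apply It\^o's formula to $\varphi_\epsilon(Y_{n\alpha,i})$ for each $i$, with $Y_{n\alpha,i}=\bar\alpha_{n1,i}-\bar\alpha_{n0,i}$. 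The pivotal step is evaluating the drift increment $\mathbbm{f}_i(\cdot,\bar\alpha_{n0})-\mathbbm{f}_i(\cdot,\alpha)$ at the stopping time $\kappa_{\alpha,i}$: here the quasi-monotone condition $\mathbbm{f}_i(t,x)-\mathbbm{f}_i(t,y)\ge -M(x_i-y_i)$ for $\alpha\le y\le x\le\beta$ replaces the scalar Lipschitz inequality, and combined with $L/2-2M<0$ it closes the $\varphi_\epsilon$-estimate. Property (ii) is proved by the same machinery applied to $Y_{nu,i}=u_{2,i}-u_{n1,i}$, the quasi-monotone inequality again supplying the required sign at the stopping time.

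Finally, iterating $\bar\alpha_n=A(\bar\alpha_{n-1})$, $\bar\beta_n=A(\bar\beta_{n-1})$ yields componentwise monotone sequences sandwiched between $\bar\alpha$ and $\bar\beta$; monotone and dominated convergence in the integral form of \eqref{dsde} produce limits $\bar a\le\bar b$ and a solution $x_0^\ast\in[\bar a,\bar b]$ of \eqref{dsde} with $x_0^\ast(0)\overset{d}{=}x_0^\ast(\theta)$. The periodic extension argument of Step~3 in Theorem \ref{1th}—using the $\theta$-shifted Brownian motion $\widetilde B(s)=B(s+\theta)-B(\theta)$ together with pathwise uniqueness and the $\theta$-periodicity of $\mathbbm{f},\mathbbm{g}$—then delivers a $\theta$-periodic-in-distribution solution of \eqref{dsde}. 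The main obstacle is that $\mathbbm{f}_i$ depends on all $d$ coordinates while the controlled quantity is only $Y_i$; unlike in the scalar case, no two-sided Lipschitz bound is available componentwise, and one must check that at $\kappa_{\alpha,i}$ (where $Y_{n\alpha,i}^-=0$ but the other components need not vanish) the quasi-monotone inequality still applies — this uses the already-established bound $\bar\alpha_{n0}\ge\alpha$ to stay in the domain where $H^\ast$ is in force.
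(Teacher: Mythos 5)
Your proposal is correct and follows exactly the approach the paper takes, or rather \emph{indicates}, since the paper gives no proof of Theorem \ref{dth} at all beyond the one-line remark ``As the same argument as in Section 3''. You supply the detail the paper omits, and you locate the precise places where the multi-dimensional structure matters: the Banach contraction for the period map goes through verbatim because the componentwise diffusion bound $\sum_j|\mathbbm{g}_{ij}(t,x)-\mathbbm{g}_{ij}(t,y)|^2\le L|x_i-y_i|^2$ closes the Gronwall estimate exactly as in the scalar case (and, as you implicitly use, this bound forces $\mathbbm{g}_{ij}$ to depend on $x$ only through $x_i$, which is the diagonal structure that the componentwise stochastic comparison and the Borel--Cantelli/$\varphi_\epsilon$ machinery require); and the quasi-monotone condition $\mathbbm{f}_i(t,x)-\mathbbm{f}_i(t,y)\ge -M(x_i-y_i)$ replaces the two-sided Lipschitz inequality at the stopping-time comparison, yielding the same $-\varepsilon_n<0$ contradiction. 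Your final caveat is the right one and is worth spelling out when writing this up: at $\kappa_{\alpha,i}$ only the $i$-th component of $Y_{n\alpha}$ is pinned to zero, so one cannot reduce to a scalar estimate; one must first secure $\alpha\le\bar\alpha_{n0}\le\bar\alpha\le\beta$ (via the diagonal comparison theorem and Lemma \ref{abb}) to put both arguments of $\mathbbm{f}_i$ inside the order interval where $H^*$ applies, and only then invoke the quasi-monotone bound with the \emph{vector} arguments $\bar\alpha_{n0}$ and $\alpha$. You are also right that the contraction threshold $M-L>\ln 2/(2\theta)$ is needed but is not restated in the hypotheses of Theorem \ref{dth}; this appears to be an omission in the paper rather than a genuine relaxation.
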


\section{Applications}
In this section, we give some examples to illustrate our results
developed in this paper.

\begin{example}\label{E1}
Consider the following scalar SDE:
\begin{equation}\label{e1}
dx=\left(-a(t)x^{2n+1}+\sum\limits_{i=1}^{2n}a_i(t)x^i+e(t)\right)dt+xdB(t).
\end{equation}
Here $a, a_i, e: \mathbb{R}\rightarrow\mathbb{R} $ are continuous
$1-$periodic functions
 ($i=1,2,\cdots,2n$) and $a(t)\geq\sigma>0$, B(t) is a one-dimensional Gaussian process.
 Let
\begin{align*}
\alpha(t)=c\left(-1+\f{t}{2}\right),
~~\beta(t)=c\left(1-\f{t}{2}\right), ~~t\in[0,1],~~c\gg 1.
\end{align*}
Obviously, $\alpha(t)<\beta(t)$ for $t\in[0,1]$. Besides, it is easy
to check that $\alpha(0)\leq \alpha(1)$, $\beta(0)\geq\beta(1)$, and
\begin{align*}
\alpha'=\f{c}{2}< f(t,\alpha),~~\beta'=-\f{c}{2}> f(t,\beta),
\end{align*}
when $c$ is sufficiently large. Moreover, we have
\begin{align*}
|f(t,x)-f(t,y)|&=\left|\left(-a(t)x^{2n+1}+\sum\limits_{i=1}^{2n}a_i(t)x^i+e(t)\right)-\left(-a(t)y^{2n+1}+\sum\limits_{i=1}^{2n}a_i(t)y^i+e(t)\right)\right|\\
&=|x-y|\left|-a(t)\left(x^{2n}+x^{2n-1}y+\cdots+y^{2n}\right)+\sum\limits_{i=1}^{2n}a_i(t)\left(x^{i-1}+x^{i-2}y+\cdots+y^i\right)\right|\\
&\leq M|x-y|~~\mbox{for any}~~ x,y\in[\alpha,\beta], ~~~t\in[0,1],
\end{align*}
where $M=\max\limits_{t\in[0,1],
x,y\in[\alpha,\beta]}\left|-a(t)\left(x^{2n}+x^{2n-1}y+\cdots+y^{2n}\right)+\sum\limits_{i=1}^{2n}a_i(t)\left(x^{i-1}+x^{i-2}y+\cdots+y^i\right)\right|<\infty$,
and
\begin{align*}
|g(t,x)-g(t,y)|^2=|x-y|^2~~\mbox{for any}~~
x,y\in[\alpha,\beta],t\in[0,1].
\end{align*}
Therefore, by Theorems \ref{1th}, there is a 1-periodic
solution $x(t)$ in distribution of system \eqref{e1}.
\end{example}

\begin{example}\label{E2}
Consider the following $d$-dimensional SDE:
\begin{equation}\label{e2}
dX=\left(-A(t)X+p(t)\right)dt+XdB(t),
\end{equation}
where $X=(x_1,x_2,\cdots,x_d)^\top$,
$B(t)=(B_1(t),B_2(t),\cdots,B_d(t))$ are a $d-$dimensional Gaussian
process, $A=(a_{ij})_{d\times d}:
\mathbb{R}\rightarrow\mathbb{R}^{d\times d}$,
$p=(p_1,p_2,\cdots,p_d)^\top:\mathbb{R}\rightarrow\mathbb{R}^d$ are
continuous $\theta-$periodic functions
 and satisfy
\begin{align*}
&a_{ij}\leq 0,~~i\neq j,\\
&a_{ii}\geq\sigma>0,~~i,j=1,2,\cdots, d,\\
&a_{ii}\geq-\sum\limits_{j\neq i}a_{ij},~~i=1,2,\cdots, d.
\end{align*}
Set
\begin{align*}
\alpha(t)=-c(1,1,\cdots,1)^\top, ~~\beta(t)=-\alpha(t),
~~t\in[0,\theta],~~c\gg 1.
\end{align*}
Obviously, $\alpha(t)<\beta(t)$ for $t\in[0,\theta]$, and
$\alpha(0)\leq \alpha(1)$, $\beta(0)\geq\beta(1)$. Besides, it is
easy to check that
\begin{align*}
&\alpha'=(0,0,\cdots,0)^\top < \mathbbm{f}(t,\alpha)
=\left(c\sum\limits_{j=1}^d a_{1j} +p_1(t), c\sum\limits_{j=1}^d
a_{2j} +p_2(t), \cdots, c\sum\limits_{j=1}^d a_{dj} +p_d(t)
\right)^\top,\\
&\beta'=(0,0,\cdots,0)^\top>
\mathbbm{f}(t,\beta)=\left(-c\sum\limits_{j=1}^d a_{1j} +p_1(t),
-c\sum\limits_{j=1}^d a_{2j} +p_2(t), \cdots, -c\sum\limits_{j=1}^d
a_{dj}+p_d(t)\right)^\top,
\end{align*}
for $c$ sufficiently large. Moreover, we have for any
$\alpha\leq Y\leq X\leq\beta, t\in[0,\theta]$,
\begin{align*}
\mathbbm{f}_i(t,X)-\mathbbm{f}_i(t,Y)&=-\sum_{j=1}^na_{ij}(x_j-y_j)
\geq-a_{ii}(x_i-y_i), i=1,2,\cdots, d,
\end{align*}
and for any $X,Y\in[\alpha,\beta],t\in[0,\theta]$,
\begin{align*}
\|\mathbbm{f}(t,X)-\mathbbm{f}(t,Y)\|&=\|A(t)(X-Y)\|\leq M\|X-Y\|,\\
|\mathbbm{g}_i(t,X)-\mathbbm{g}_i(t,Y)|^2&=|x_i-y_i|^2,
i=1,2,\cdots, d,
\end{align*}
where $M=\max\limits_{t\in[0,\theta]}\{\|A(t)\|\}$. Therefore, by
Theorems \ref{dth}, there is a $\theta$-periodic
solution $X(t)$ in distribution of system \eqref{e1}.
\end{example}

\section*{Acknowledgments}
The first author was supported by NSFC grant 11601043, China
Postdoctoral Science Foundation (Grant No. 2016M590243,
2019T120226), Jiangsu Province ``333 High-Level Personnel Training
Project'' (Grant No. BRA2017468) and Qing Lan Project of Jiangsu
Province of 2016 and 2017. The second author was supported by NSFC
grant 11201173. The third author was supported by National Basic
Research Program of China (Grant No. 2013CB834100) and NSFC grants
11171132 and 11571065.


\begin{thebibliography}{10}\small
\bibitem{BaiJiang2017}
X.M. Bai, J.F. Jiang, Comparison theorem for stochastic functional
differential equations and applications. J. Dynam. Differential
Equations 29 (2017) 1-24.

\bibitem{BernfeldLakshmtkantham1974}
S.R. Bernfeld, V. Lakshmtkantham, An Introduction to Nonlinear
Boundary Value Problems. Mathematics in Science and Engineering 109,
Academic Press, 1974.


\bibitem{BuckdahnPend1999}
R. Buckdahn, S. Peng, Ergodic backward stochastic differential
equations and associated partial differential equations. Prog.
Probab. 45 (1999) 73-85.

\bibitem{CandidoLlibreNovaes2017}
M. Candido,  J. Llibre,  D. Novaes,  Persistence of periodic
solutions for higher order perturbed differential systems via
Lyapunov-Schmidt reduction. Nonlinearity 30 (2017) 3560-3586.

\bibitem{ChenHanLiYang2017}
F. Chen, Y. Han, Y. Li, X. Yang, Periodic solutions of Fokker-Planck
equations. J. Differential Equations 263 (2017) 285-298.

\bibitem{FengZhaoZhou2011}
C. Feng, H. Zhao, B. Zhou, Pathwise random periodic solutions of
stochastic differential equations. J. Differential Equations 251
(2011) 119-149.

\bibitem{FengWuZhao2016}
C. Feng, Y. Wu, H. Zhao,  Anticipating random periodic solutions-I.
SDEs with multiplicative linear noise. J. Funct. Anal. 271 (2016)
365-417.


\bibitem{Fokam2017}
J. Fokam,  Multiplicity and regularity of large periodic solutions
with rational frequency for a class of semilinear monotone wave
equations. Proc. Amer. Math. Soc. 145 (2017) 4283-4297.

\bibitem{Gao217}
P. Gao, Some periodic type solutions for stochastic
reaction-diffusion equation with cubic nonlinearities. Comput. Math.
Appl. 74 (2017) 2281-2297.



\bibitem{IkedaWatanabe1977}
N. Ikeda, S. Watanabe, A comparison theorem for solutions of
stochastic differential equations and its applications. Osaka J.
Math. 14 (1977) 619-633.

\bibitem{JiYi2019}
M. Ji, W. Qi, Z. Shen, Y. Yi, Existence of periodic probability
solutions to Fokker-Planck equations with applications. Journal of
Functional Analysis (in press).


\bibitem{KaratzasShreve1991}
I. Karatzas, S.E. Shreve, Brownian Motion and Stochastic Calculus.
Springer, Berlin, 1991.

\bibitem{KaulVatsala1986}
S.K. Kaul, A.S. Vatsala, Monotone method for integro-differential
equations with periodic boundary conditions. Appl. Anal. 21 (1986)
297-305.

\bibitem{Khasminskii2012}
R.Z. Khasminskii, Stochastic Stability of Differential Equations,
With contributions by G. N. Milstein and M. B. Nevelson, Completely
revised and enlarged second edition, Stochastic Modelling and
Applied Probability, vol. 66, Springer, Heidelberg, 2012.

\bibitem{Krasnoselskii1968}
M. A. Krasnosel'skii, Translations along Trajectories of
Differential Equations. AMS Trans. Math. Monographs 19, 1968.

\bibitem{LakshmtkanthamLeela1984}
V. Lakshmtkantham, S. Leela, Remarks on first and second periodic
boundary value problems. Nonlin. Anal. 8 (1984) 281-287.


\bibitem{LiWangLiLu1999}
Y. Li, H.Z. Wang, X.R. L\"{u}, X.G. Lu, Periodic solutions for
functional-differential equations with infinite lead and delay.
Appl. Math. Comput. 70 (1995) 1-28.

\bibitem{LiCongLinLiu1999}
Y. Li, F.Z. Cong, Z.H. Lin, W.B. Liu, Periodic solutions for
evolution equations. Nonlinear Anal. 36 (1999) 275-293.

\bibitem{LiuSun2014}
Z. Liu, K. Sun, Almost automorphic solutions for stochastic
differential equations driven by L\'{e}vy noise. J. Funct. Anal. 266
(2014) 1115-1149.


\bibitem{LiuWang2016}
Z. Liu, W. Wang, Favard separation method for almost periodic
stochastic differential equations. J. Differential Equations 260
(2016) 8109-8136.

\bibitem{Mellah2013}
O. Mellah, P.R. De Fitte, Counterexamples to mean square almost
periodicity of the solutions of some SDEs with almost periodic
coefficients. Electron. J. Differential Equations 91 (2013) 1-7.

\bibitem{Mohammed1984}
S.E.A. Mohammed, Stochastic Functional Differential Equations.
Researsch Notes in Mathematics. Pitman, Boston, 1984.

\bibitem{PengZhu2006}
S. Peng, X. Zhu, Necessary and sufficient condition for comparison
theorem of 1-dimensional stochastic differential equations. Stoch.
Proc. Appl. 116 (2006) 370-380.

\bibitem{Prato1995}
G.D. Prato, C. Tudor, Periodic and almost periodic solutions for
semilinear stochastic equations. Stochastic Anal. Appl. 13 (1995)
13-33.

\bibitem{SedaNietoCera1992}
V. Seda, J.J. Nieto, M. Cera, Periodic boundary value problems for
nonlinear higher order ordinary differential equations. Appl. Math.
Comput. 48 (1992) 71-82.


\bibitem{Smith1996}
H.L. Smith, Monotone Dynamical Systems. An Introduction to the
Theory of Competitive and Cooperative Systems. Amer Math Soc,
Providence Rhode Island, 1996.

\bibitem{TaralloZhou2018}
M. Tarallo, Z. Zhou, Limit periodic upper and lower solutions in a
generic sense. Discrete Contin. Dyn. Syst. 38 (2018) 293-309.

\bibitem{Tudor1992}
C. Tudor, Almost periodic solutions of affine stochastic evolution
equations. Stochastics Stochastics Rep. 38 (1992) 251-266.







\bibitem{Zhao2009}
H. Zhao, Z. Zheng, Random periodic solutions of random dynamical
systems. J. Differential Equations 246 (2009) 2020-2038.
\end{thebibliography}
\end{document}